\documentclass{amsart}
\usepackage[foot]{amsaddr}
\usepackage{amssymb,latexsym,amsmath,amsfonts,graphicx,xcolor,pb-diagram}
\usepackage[mathscr]{eucal}
\usepackage{tikz, tikz-cd}
\usepackage{relsize}
\usepackage{slashed}

\newcommand{\mpar}{\par \medskip \par }

\numberwithin{equation}{section}

\swapnumbers

\theoremstyle{definition}

\newtheorem{theorem}{Theorem}[section]

\newtheorem{corollary}[theorem]{Corollary}

\newtheorem{definition}[theorem]{Definition}

\newtheorem{remark}[theorem]{Remark}
\newtheorem{example}[theorem]{Example}

\makeindex

\begin{document}
\title{A Simple Model of Sea Spray}
\author{Samantha Donner$^1$, Peter March$^2$, Mathew Umano$^1$}

\begin{abstract}
We propose a stochastic model of sea spray as a shot noise process attached to a homogeneous Poisson point process whose intensity measure is $\mathcal{F}(r) dr dt,$ where $\mathcal{F}(r)$ is the sea spray generation function, that is to say the average number of droplets of radius $r$ ejected into the atmosphere per unit area per unit time. We assume the impulse response function defining the shot noise is determined by the trajectory $z=z_t(r)$ of the center of a spherical droplet of radius $r=r_t.$ We also assume  the droplet trajectory is vertical and droplets don't interact with one another. Under these assumptions, for each triple $(\mathcal{F}, z, r)$ we show the total mass of airborne sea spray droplets below height $z$ at time $t$ is a stationary stochastic process $\mathcal{S}_t(z)$ whose mean $m(z)$ and covariance $\sigma(z,t; w,s)$ can be written explicitly in terms of $z_t(r), r_t$ and $\mathcal{F}(r).$ We illustrate the model through examples of increasing physical relevance, the most general of which includes gravity, Stokes drag forces, and Langmuir's $D$-squared law of evaporation. A consequence of this particular example is an estimate of the height of the marine boundary layer at sufficiently large wind speeds.
\end{abstract}

\address{$^1$Department of Environmental Sciences, Rutgers University - New Brunswick} 
\address{$^2$Department of Mathematics, Rutgers University - New Brunswick}

\maketitle
\section{Introduction}
Breaking waves on the ocean create air bubbles in the surface layer which, when they burst, eject droplets of sea water into the atmosphere creating sea spray. These droplets subsequently follow dynamical trajectories governed by Newton's laws and can persist in the atmosphere for significant amounts of time. Because of its ubiquity and persistence, sea spray is understood to play an important role in the physics and chemistry of the marine boundary layer, e.g. \cite{V}, \cite{RV} The complexity of the underlying physical processes naturally leads to modeling sea spray in stochastic terms.  Our goal is to create a simple model of sea spray that is physically plausible yet mathematically tractable. 

\mpar
We make the stochastic nature of sea spray explicit by introducing ideas and techniques of probability theory to describe the random distribution of airborne droplets in time and in elevation above sea level. Assuming ejected droplets follow trajectories governed by physical law, and assuming that moving droplets do not interfere with one another, the number of droplets at location $z$ at time $t$ is the sum over all droplets ejected at times $s<t$ whose trajectory at time $t-s$ places them at $z.$ Physical processes of this kind are typically modeled as \textit{shot noise} in which a random event occurring at one time produces an after effect at a subsequent time. 

\mpar
We show the sea spray generation function is the intensity measure of a Poisson point process that counts the number of spherical droplets of a given range of radii ejected in a given interval of time. We assume the Poisson process is time homogeneous, which is equivalent to assuming equilibrium sea state conditions. To make things tractable, we simplify the physics of droplet motion by assuming an ejected droplet of initial radius $r$ is a sphere of radius $r_t$ centered at $z_t(r),$ which we think of a point particle carrying an amount of mass proportional to the cube of its radius. We assume its trajectory is purely vertical and it does not interfere with other droplets. The pair $(z_t(r), r_t)$  is used to define the \textit{impulse response function} and hence the corresponding shot noise $\mathcal{S}_t,$ which we call the \textit{sea spray} process. 

\mpar
In technical terms, $\mathcal{S}_t$ is a measure-valued process. In particular, this means for any interval $(a,b],$ where $0<a<b<\infty$ are specified heights above sea level, $\mathcal{S}_t((a,b])$ is the total mass at time $t$ of airborne sea spray droplets at height $z$ for any $a<z\leq b.$ Let $\mathcal{S}_t(z) =\mathcal{S}_t((0,z])$ denote the total mass of airborne sea spray at or below height $z$ at time $t.$ This is a random variable for each $t$ and $z.$ It develops that the two parameter process $(t,z)\to\mathcal{S}_t(z)$ is the object of practical interest. Although it is a secondary quantity derived from $\mathcal{S}_t,$ we refer to either of them as the sea spray process. 

\mpar
Under mild technical assumptions we show that for each triple $(\mathcal{F}, z, r)$ there is a unique stationary stochastic process $\mathcal{S}_t(z)$ which measures the total mass of sea spray below height $z.$  We calculate its mean function,
$$
m(z)=E(\mathcal{S}_t(z))
$$
and covariance function,
$$
\sigma^2(z,t; w,s) =E(\,[\mathcal{S}_t(z)-m(z)]\cdot [\mathcal{S}_s(w)-m(w)\,)
$$ 

as explicit integrals involving $z_t(r)$ and $r_t$ with respect to the measure $\mathcal{F}(r)dr dt.$

\mpar
We use the mean function to define quantiles $z_X,\, 0\leq X\leq 100,$ such that on average $X\%$ of sea spray mass lies below height $z_X.$ We propose a definition of the height of the marine boundary layer, $z_{MBL},$ as the top quantile, $z_{100},$ and show that for typical sea spray generation functions, $z_{100}$ is a function of the trajectory alone. Specifically, $z_{100}$ equals the maximum height reached by any droplet, which we denote by $z^{**}.$ That is to say the three quantities $z_{MBL}=z_{100}=z^{**}$ are equal. (See Corollary 3.5).

\mpar
The mathematical theory does not depend on the precise nature of $z_t(r),\, r_t,$ or $\mathcal{F}(r),$ so it applies to a broad range of possible sea spray generation functions and physical trajectory functions. To illustrate the model, we consider several examples of increasing physical relevance. 

\mpar
In Example 4.1 we calculate $m(z), \sigma(z,t;w,s),$  and $z_X$ exactly when $\mathcal{F}(r)$ is constant on a finite interval $0<r_{min}\leq r\leq r_{max}<\infty$ and $z_t(r)=z_t$ is the parabolic trajectory of a freely falling particle of initial velocity $v.$ 

\mpar
In Example 4.2 we include a vertical wind field and Stokes drag forces. The solution depends on $\rho,$ the mass density of seawater, $\mu,$ the dynamic viscosity of the atmosphere, $v,$ the initial velocity, and $w,$ the vertical wind speed. For typical $\mathcal{F}$ we find (1) for sufficiently small wind speeds $m(z)$ is bounded for all values of $z$, leading to a finite marine boundary layer, (2) there is an intermediate range of wind speeds such that $m(z)=\infty$ for all sufficiently large $z$, and (3) for sufficiently large wind speeds, $m(z)$ is finite but unbounded, meaning the marine boundary layer is infinite.

\mpar
The unreasonable features of Example 4.2 suggest the absence of important physical processes, such as heat and mass transfer. In Example 4.3 we adopt the simplest model of evaporation, namely Langmuir's $D$-squared law \cite{L}, which states that the surface area of a spherical droplet or, equivalently the square of its diameter, decreases linearly. This is equivalent to,
$$
r^2_t=r_0^2-Kt,\quad 0\leq t\leq r_0^2/K
$$
for some $K>0,$ called the evaporation constant, which has units of $length ^2/time.$

\mpar
We derive formulas for $z_t(r)$ which show precisely how the trajectory depends on physical parameters. The formulas are explicit but too involved to derive exact formulas for $m(z)$ and $\sigma(z,t;w,s),$ although it's feasible to compute them numerically for physically reasonable $\mathcal{F}.$ However, we can show that for general $\mathcal{F}$ the mean $m(z)$ is a bounded function of $z,$ which shows, in particular, that the height of the marine boundary layer, $z_{MBL},$  is finite. For positive, continuous $\mathcal{F}(r)$ supported on a finite interval $r_{min}\leq r\leq r_{max}$ we also show that,
$$
\lim_{w\to\infty}\frac{z^{**}}{w}=\frac{9\mu}{9\mu +2\rho K}\times\frac{r^2_{max}}{K},
$$
where $z^{**}$ is the maximum height reached by any droplet. This implies a rule-of-thumb approximation,
$$
z_{MBL} \approx \frac{9\mu}{9\mu +2\rho K}\times w\times\frac{r_{max}^2}{K} 
$$
for the height of the marine boundary layer at large wind speeds.

\mpar
The intuition behind this approximation is pretty clear. At sufficiently large wind speeds no droplet returns to the ocean's surface before evaporating. But before evaporating completely, every droplet's radius reduces to a sufficiently small size that it acts ballistically and is advected by the wind. Since larger droplets take longer to evaporate, droplets of maximum radius are advected for the greatest length of time. So the approximation just says that the maximum height attained by any droplet is proportional to wind speed multiplied by the evaporation time of the largest droplet. The proportionality constant is less than one, to account for transitory initial effects, and depends explicitly on physical parameters.  See Example 4.3 for an exact determination of $z^{**}$ for sufficiently large wind speeds, which is mathematically rigorous but less intuitive than the argument given here.

\mpar\mpar 
\section{Physical Assumptions}
We assume the dynamic sea state conditions are in equilibrium, meaning the physical processes generating waves and bursting bubbles are also in equilibrium. We focus on a small patch of the ocean's surface which is large enough so that the number of ejected droplets per unit time has some statistical regularity. A conventional choice is a square of side length equal to a centimeter, which we call a unit patch. We assume that droplet ejection events in disjoint unit patches are statistically independent and identically distributed. The mean number of droplets of radius $r$ ejected per square centimeter per second is the sea spray generation function $\mathcal{F}(r)$. 

\mpar
There are different modes of droplet production, for example jet drops or film drops, and each mode leads to its own generation function, $\mathcal{F}_{jet}$ or $\mathcal{F}_{film}.$ Since the physical mechanisms in each production mode are assumed to be independent of one another, a general sea spray generation function is understood be be a sum of several generation functions, one for each principal mode of production.

\mpar
The sea spray generation function proposed by Dieke, Reichi, and Paulot \cite{DRP} has the functional form,
$$
\mathcal{F}(r)=\frac{V_A}{\mathcal{C}_A}\, \mathcal{A}\, r^{-(1+\zeta)}\left[ \Gamma_{inc}\left(m+\zeta, m\,\tfrac{r}{r_{max}}\right) - \Gamma_{inc}\left(m+\zeta, m\,\tfrac{r}{r_{min}}\right)\right].
$$
Here, $\Gamma_{inc}(s,x)$ is the incomplete gamma function and $V_A, \mathcal{C}_A$ and $\mathcal{A}$ are normalizing constants. Upper and lower bounds on radii are denoted by $r_{min}, r_{max}.$ They, along with the other parameters $\zeta$ and $ m$ depend on the production mode. However, for the purposes of this paper, all we require is that $\mathcal{F}(r)$ is a positive, continuous function on a finte interval $[r_{min}, r_{max}].$ 
\mpar
Next, let's consider what happens to a droplet once it's been ejected into the atmosphere. We assume the atmosphere is spatially homogeneous and subject to a vertical wind field of constant speed. We assume that a droplet of sea spray only moves up and down above the ocean patch. Droplets are not actually moving in a perfect vertical line but rather in a cylinder with a square centimeter base. It's reasonable to assume the linear dimension of a droplet is significantly smaller than a centimeter and to assume the density of droplets per unit volume is sufficiently small that droplets do not collide but rather move independently of one another.

\mpar
Finally, we assume that droplets are spheres of various radii and we idealize a droplet as a pair $(r_t, z_t(r))$ specifying a point particle located at the center of the sphere that carries all the mass of the droplet and we think of the pair as being determined by physical law.

\mpar
Of course there are a number of ways to specify $z_t$ exactly, depending on the physics.  The most general example we consider includes gravity, Stokes' approximation for the drag force that the wind exerts on the droplet, and Langmuir's $D$-squared law of evaporation. Given these assumptions, $z_t(r)$ satisfies the following pair of equations so long as $r_t>0$ and  $z_t(r) >0,$
\begin{align*}
\tfrac{4}{3}\pi r_t^3 \rho z_t^{\prime\prime}(r) & = -\tfrac{4}{3}\pi r_t^3\rho g +6\pi\mu r (w-z_t^\prime(r)),\quad z_0(r)=0,\quad z_0^\prime(r) =v\\
r^2_t & = r^2-Kt,\quad 0\leq t <r^2/K,
\end{align*}

where $\rho$ is the mass density of sea water, $g$ is the force of gravity, $\mu$ is the dynamical viscosity of the atmosphere, $w$ is the vertical wind speed, $v$ is the initial velocity, and $K$ is the evaporation constant.

\mpar
We are able to calculate the trajectory exactly in Example 4.3, but more physically realistic examples, say where the evaporation rate depends on the droplet's position and velocity, will require numerical approximation. Nevertheless, all our model requires is a choice of sea spray generation function $\mathcal{F}(r)$ to determine the average rate at which droplets of a given radius are ejected, a choice of dynamical trajectory $z_t(r)$ to determine the height of the droplet above sea level as a function of time, and a choice of radius $r_t$ to determine the mass of the droplet.  The more physically accurate are these choices, the more relevant is the model.

\mpar\mpar
\section{Mathematical Theory}
In this section we neglect the physical accuracy of $\mathcal{F}(r),\, z_t(r),$ and $r_t$ and simply focus on their mathematical aspects. From this perspective, the sea spray process is a function of the inputs $(\mathcal{F}, z, r).$

\mpar
Let $\mathcal{F}(dr)$ be a nonnegative measure on the positive half line $(0,\infty),$ which we think of as the set of all possible droplet radii. Since $\mathcal{F}(dr)$ represents the number of droplets with radii in the infinitesimal interval between $r$ and $r+dr$ that are ejected per square centimeter per second, we assume $\mathcal{F}(dr)$ is a finite measure to ensure the total rate of droplet ejection is finite. Let,

$$
\Lambda(dr\, dt)= \mathcal{F}(dr) dt 
$$

\mpar
be the associated measure on $\mathcal{X}=(0,\infty)\times\mathbb{R},$ where we think of a point $(r,t)\in \mathcal{X}$ as marking a time at which a droplet of radius $r$ is ejected from the sea surface. 

\mpar
We interpret $\Lambda$ as the intensity measure of a Poisson point process that counts the random number of ejected droplets of radius $r$ at time $t.$ By a standard theorem in probability \cite{K}, there exits a unique stochastic process $\mathcal{N}$ with the following three properties:

\mpar\mpar
\begin{itemize}
\item{For every $n\geq 1,$ let $R_1, R_2, \cdots R_n \subset (0,\infty)$ be finite intervals of possible droplet radii and let $T_1, T_2, \cdots , T_n\subset\mathbb{R}$ be finite intervals of possible ejection times. Then,

$$
\mathcal{N}(R_1\times T_1),\,\,\, \mathcal{N}(R_2\times T_1),\,\,\, \cdots,\,\,\, \mathcal{N}(R_n\times T_n)
$$

\mpar
\negthickspace\negthickspace\negthickspace\negthickspace are integer valued random variables, and they are statistically independent provided $R_k\times T_k,\,\,1\leq k\leq n,$ are pairwise disjoint subsets of $\mathcal{X}$ }\\

\item{For every $u\in\mathbb{R},$ let $T_u=\{t+u\mid t\in T\}$ be the translation of $T$ by amount $u.$ Then, $\mathcal{N}(T_u\times R)$ and $\mathcal{N}(T\times R)$ are identically distributed for every interval $R$}\\

\item{ For every pair of finite intervals $R$ and $T$,

$$
P(\mathcal{N}(R\times T)=k) =\frac{(\Lambda(R\times T))^k}{k!} e^{-\Lambda(R\times T)}
$$}

\end{itemize}

\mpar
The third bullet says that if $R=(a,b],\,T= (s, t],$ and $\mathcal{F}$ has a density then the random number of droplets $\mathcal{N}(\,(a,b]\times (s,t]\,)$ ejected at times $s<u\leq t$ with radii $a<r\leq b$ follows a Poisson distribution of intensity,

$$
\Lambda(R\times T)= \int_a^b\int_s^t\mathcal{F}(dr) du =(t-s)\negthickspace\int_a^b\mathcal{F}(r)dr.
$$

\mpar
The second bullet says that the counting process $\mathcal{N}$ is time homogeneous which reflects the physical assumption of equilibrium sea state conditions. The first bullet says that event counts on disjoint subsets of $\mathcal{X}$ are statistically independent. 

\mpar
This is the sense in which the sea spray generation function $\mathcal{F}$ determines a unique homogeneous Poisson point process $\mathcal{N}$.

\mpar
We create a shot noise process from the counting process $\mathcal{N}$ and a  family of trajectories $z_t(r)$ and their associated droplet radii $r_t$ by introducing the impulse response function,
$$
\phi(t,r)=\tfrac{4}{3}\pi r_t^3\rho\,\delta_{z_t(r)},\quad r_0=r,\,\, z_0=0
$$

where $\delta_z$ is the Dirac measure centered at $z\geq 0.$ Evidently, $\phi(t,r)$ records the fact that if a droplet of radius $r$ was ejected at time 0, then there is a volume $\tfrac{4}{3} \pi r_t^3$ of sea water, hence a mass of $\tfrac{4}{3} \pi r_t^3\rho,$ at height $z_t(r)$ at time $t$.

\mpar
Different physical assumptions will lead to different trajectories $z_t(r)$ and radius functions $r_t$. In mathematical terms, all we require are the following conditions. Let,
$$
\zeta=\zeta(r)= \inf\{\, t>0\mid z_t=0\quad\text{or}\quad r_t=0\,\}.
$$
be the \textit{duration} of the droplet. Note that the pair $(z_t, r_t)$ is indexed by $r$ which means $\zeta$ is a function of $r.$  We assume,
\mpar
\begin{itemize}
\item{$r_t, \,z_t(r)$ are continuous, nonnegative functions of $t$} on the interval $[0,\zeta]$
\item{if $r_t=0$ for some $t>0$ then $r_s=0$ and $z_s=0$ for all $s\geq t$}
\item{if $z_t(r)=0$ for some $t>0$ then $z_s(r)=0$ and $r_s=0$ for all $s\geq t$}
\end{itemize}

\mpar
The third bullet states that droplets are \textit{inelastic}: droplets which return to the ocean merge back into to ocean as opposed, say, to bouncing off the surface back into the atmosphere. Bullet two states that if a droplet evaporates then it can't condense into a new droplet at a later time. Furthermore, for purely technical reasons, if a droplet has no mass then its height is reset to zero. Bullet one is an important technical assumption which is roughly equivalent to assuming that the droplet's radius and trajectory are continuous functions of physical parameters.

\mpar
In intuitive terms, the sea spray process  is the mass distribution at time $t$ of airborne sea spray droplets, which means that $\mathcal{S}_t$ is a random measure on $(0,\infty).$ To put this on a firm mathematical foundation, let $\mathcal{M}$ be the space of nonnegative measures on the vertical half line $[0,\infty)$ and let $C([0,\infty),\mathcal{M})$ be the space of all continuous functions from $[0,\infty)$ to $\mathcal{M}.$ It's clear that $\phi(r,t)\in\mathcal{M}$ for every $r>0$ and $t\geq 0$ and therefore for each fixed $r$ the function $t\to\phi(t,r),\, t\geq 0$ is an element of $C([0,\infty), \mathcal{M}).$ Finally, thinking of $0<r<\infty$ as a variable, we see that the formula for $\phi$ implies,
$$
\phi\colon (0,\infty)\to C([0,\infty), \mathcal{M})
$$
is a function from the space of initial radii to the space of continuous functions from the nonnegative time line to the space measures on the nonnegative space line.

\mpar
Let's fix a time $t$ and ask which pairs $(s, r),$ or \textit{bursts}, can contribute to $\mathcal{S}_t$? Evidently no burst with $s>t$ contributes since it hasn't happened yet and any burst with $s \leq t$ may possibly contribute something. Thus, a burst $(s,r)$ contributes mass at a positive height if and only if $t-s> 0$ and $z_{t-s}(r)> 0.$  

\mpar
This observation just amounts to the statement that the sea spray process is the integral of all contributing bursts with respect to the Poisson point process, namely,
$$
\mathcal{S}_t=\int_{-\infty}^t\int_0^\infty\phi(t-s,r)\, \mathcal{N}(dr, ds).
$$

Let's state this formally for the record.
\begin{definition}
Let $\mathcal{F}(dr)$ be a finite measure on $(0,\infty)$ and for each fixed $r>0,$ let $(z_t(r), r_t) $ be a pair of nonnegative functions of $t\geq 0.$ Let
$$
\zeta=\inf\{\, t>0\mid z_t=0\quad\text{or}\quad r_t=0\,\}.
$$
be the duration of the droplet. We assume, 
\begin{itemize}
\item{$r_t, \,z_t(r)$ are continuous  on the interval $[0,\zeta]$}
\item{if $r_t=0$ for some $t>0$ then $r_s=0$ and $z_s=0$ for all $s\geq t$}
\item{if $z_t(r)=0$ for some $t>0$ then $z_s(r)=0$ and $r_s=0$ for all $s\geq t$}
\end{itemize}

\mpar
Let $\mathcal{X}=\mathbb{R}\times (0,\infty)$ and let $\Lambda(dr\, dt)=\mathcal{F}(dr) dt.$ Let $\mathcal{N}$ be the Poisson point process whose intensity measure is $\Lambda.$ Let $\mathcal{M}$ be the space of positive measures on $[0,\infty)$ and let $C([0,\infty),\mathcal{M})$ be the space of continuous functions from $[0,\infty)$ to $\mathcal{M}.$

\mpar
Let $\phi\colon (0,\infty)\to C([0,\infty), \mathcal{M})$ be defined by the rule,
$$
\phi(r,t) = \tfrac{4}{3}\pi r_t^3\rho\, \delta_{z_t(r)}
$$
Then the sea spray process is defined by the formula,

$$
\mathcal{S}_t =\int_{-\infty}^t\int_0^\infty \phi(t-s,r)\, \mathcal{N}(dr\, ds).
$$
\end{definition}

\mpar
\begin{remark}
\textit{(1) Observe that for each fixed $t,\,\mathcal{S}_t$ is a random variable with values in $\mathcal{M}$. By construction it is a random sum of Dirac measures at points $z\in [0,\infty).$ }

\mpar
\textit{(2) No droplet makes a contribution to the sea spay process after its duration.}

\mpar
\textit{(3) All integrals defining the sea spray process are integrals of nonnegative functions with respect to nonnegative measures. The mathematical theory we use permits infinite values for such integrals but generally speaking they only occur in unusual circumstances. For example, if the trajectory function permits suspended droplets for which $a\leq z_t(r)\leq b$ for all sufficiently large $t$ or if $r_t$ diverges at a finite time, neither of which is physically realistic.}
\end{remark}

\mpar
By construction, $\mathcal{S}_t$ is a random sum of Dirac measures on the vertical half line $[0,\infty).$ We are interested in the mass distribution of airborne droplets as a function of time so we want to take care to omit the initial mass at 0. To do that rigorously, let $z>0$ be a height, let $0<\epsilon < z,$ and consider the $\mathcal{S}_t$ measure of the set $[\epsilon ,z],$ namely $\mathcal{S}_t([\epsilon, z]).$ The limit,
\begin{align*}
\mathcal{S}_t(z) & = \lim_{\epsilon\to 0} \mathcal{S}_t([\epsilon, z])\\
& = \tfrac{4}{3}\pi\rho \int_{-\infty}^t\int_0^\infty r_{t-s}^3 1_{(0, z]}(z_{t-s}(r))\,\mathcal{N}(dr\, ds).
\end{align*}

\mpar
is precisely the total  mass of airborne droplets at height less than or equal to $z$ at time $t.$ 

\mpar
\begin{remark}
\textit{In practical terms, the two parameter process  $\mathcal{S}_t(z)$ is the quantity of interest and we will refer to both $\mathcal{S}_t(z)$ and $\mathcal{S}_t$ as the sea spray process. This is ambiguous terminology but context usually makes usage clear. In any event, the formula,
$$
\mathcal{S}_t((z, z^\prime]) = \mathcal{S}_t(z^\prime)-\mathcal{S}_t(z)
$$
shows that the random function $\mathcal{S}(z),\, 0< z <\infty,$ determines the random measure $\mathcal{S}_t$ away from $z=0,$ so there really isn't any ambiguity. }
\end{remark}

\mpar 
Key quantities of interest regarding a stationary stochastic process are its mean and covariance. Since $\mathcal{S}_t(r)$ is derived from an underlying Poisson point process, there are explicit formulas for these characteristics.
\begin{theorem}
The sea spray process $\mathcal{S}_t(z)$ is a stationary stochastic process with mean value function,
\begin{align*}
m(z) & = E\left[\mathcal{S}_t(z)\right]\\
& =\tfrac{4}{3}\pi\rho   \int_0^\infty \negthickspace \int_0^\infty r^3_s\, 1_{(0,z]}(z_s(r))\mathcal{F}(dr) ds,
\end{align*}

and covariance function,
\begin{align*}
& \sigma^2(t,z; s, w)  = E(\,[\mathcal{S}_t(z)-m(z)]\cdot [\mathcal{S}_s(w)-m(w]\,)\\
& = \tfrac{16}{9}\pi^2\rho^2 \int_0^\infty \int_0^\infty r^3_{t -s\land t +u} r^3_{s -s\land t +u}\, 1_{(0, z]}(z_{t -s\land t +u}(r))\,1_{(0, w]}(z_{s-s\land t +u}(r))\mathcal{F}(dr) du,
\end{align*}
where $a\land b$ is the smaller of $a$ and $b$ and $1_A$ is the indicator function of the set $A.$ In particular, the equal-time covariance is,
$$
\sigma^2(z,t; w,t) = \tfrac{16}{9}\pi^2\rho^2 \int_0^\infty \int_0^\infty r^6_u\, 1_{(0, z\land w]}(z_u(r))\mathcal{F}(dr) du.
$$
\end{theorem}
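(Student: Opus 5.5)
The plan is to view $\mathcal{S}_t(z)$ as a Poisson integral $\int f_{t,z}\,d\mathcal{N}$ of the nonnegative deterministic function
$$
f_{t,z}(r,s)=\tfrac{4}{3}\pi\rho\, r_{t-s}^3\,1_{(0,z]}(z_{t-s}(r))\,1_{\{s<t\}}
$$
on $\mathcal{X}$. Then I would apply the two standard Poisson facts (Campbell's theorem and the second-moment / isometry formula), both found in \cite{K}. For nonnegative $f$ these say $E\int f\,d\mathcal{N}=\int f\,d\Lambda$. For $f,g\in L^1\cap L^2(\Lambda)$ they say $\mathrm{Cov}(\int f\,d\mathcal{N},\int g\,d\mathcal{N})=\int fg\,d\Lambda$. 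I would first establish both for simple functions $f=\sum c_k 1_{R_k\times T_k}$ with disjoint rectangles, using the three bullets: the Poisson distribution gives $E\mathcal{N}(A)=\mathrm{Var}\,\mathcal{N}(A)=\Lambda(A)$, and independence on disjoint sets kills the cross covariances. I would then pass to general nonnegative measurable $f$ by monotone convergence. This also covers the case where the integrals are infinite, consistent with Remark (3).

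For the mean, Campbell gives $m=\int_{-\infty}^t\int_0^\infty f_{t,z}(r,s)\,\mathcal{F}(dr)\,ds$. The substitution $u=t-s$ turns this into the stated formula, which visibly does not depend on $t$. The stated factor is $1_{(0,z]}(z_s(r))$, which excludes mass at height $0$; this agrees with the limit $\epsilon\to 0$ defining $\mathcal{S}_t(z)$ by monotone convergence.

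For the covariance, the isometry gives $\sigma^2=\int f_{t,z}f_{s,w}\,d\Lambda$. Here the integrand is supported on bursts times $v<s\land t$. I would substitute $u=s\land t-v\in(0,\infty)$, so that $t-v=t-s\land t+u$ and $s-v=s-s\land t+u$, and this yields exactly the displayed integrand. Setting $s=t$ makes both shifts $0$, and $1_{(0,z]}1_{(0,w]}=1_{(0,z\land w]}$ gives the equal-time formula. Note the covariance depends only on $t-s$, as it must.

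Stationarity comes from the time homogeneity in the second bullet, applied to all finite-dimensional distributions. By the same simple-function approximation, the joint law of $(\mathcal{S}_{t_1+h}(z_1),\dots,\mathcal{S}_{t_n+h}(z_n))$ is a functional of $\mathcal{N}$ shifted by $h$. Equivalently, its joint Laplace transform $\exp\{-\int(1-e^{-\sum\lambda_i f_{t_i+h,z_i}})\,d\Lambda\}$ is invariant under $s\mapsto s+h$, because $\Lambda=\mathcal{F}(dr)\,ds$ is translation invariant.

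The main obstacle is technical rather than computational: justifying the second-moment formula beyond simple functions. For the covariance to make sense, $f_{t,z}$ must lie in $L^2(\Lambda)$, i.e. the equal-time integral must be finite. I would state this as the standing hypothesis ($m(z)<\infty$ and $\int r_u^6\,\mathcal{F}(dr)\,du<\infty$). This holds whenever $r_u$ is bounded, $\mathcal{F}$ is finite and the duration $\zeta(r)$ is integrable. I would then extend the isometry by $L^2$ density of simple functions.
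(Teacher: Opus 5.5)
Your proposal is correct and follows the paper's route: you write $\mathcal{S}_t(z)$ as a Poisson integral of $f_{t,z}$, apply Campbell's first- and second-moment formulas so that the covariance collapses to $\int f_{t,z}f_{s,w}\,d\Lambda$, substitute $u=s\land t-v$, and get stationarity from the time homogeneity of $\mathcal{N}$. Your additions are refinements rather than a different method: deriving Campbell from the three bullets instead of citing it, using the Laplace functional to get stationarity of all finite-dimensional laws rather than only one-time marginals, and making explicit the finiteness of $m(z)$ and of the $r_u^6$ integral, without which the covariance is undefined and which the paper leaves implicit.
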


\begin{proof}
Stationarity of $\mathcal{S}_t(z)$ follows from the time homogeneity of $\mathcal{N}$. Recall that for every $u\in\mathbb{R}$ and intervals $R$ and $T,$ the random variables $\mathcal{N}(T\times R)$ and $\mathcal{N}(T_u\times R)$ are identically distributed. Defining a new Poisson point process $\theta_u\,\mathcal{N}$ by the rule,
$$
\theta_u \,\mathcal{N}(T\times R)=\mathcal {N}(T_u\times R),
$$

it follows that  $\theta_u\,\mathcal{N}$ and $\mathcal{N}$ are identically distributed, meaning each of them is a Poisson point process with intensity measure $\mathcal{F}(dr) dt.$ Thus, changing the time variable we have,
\begin{align*}
\mathcal{S}_{t+s} & = \tfrac{4}{3}\pi\rho \int_{-\infty}^{t+s}\int_0^\infty r^3_{t+s-u} 1_{(0, z]}(z_{t+s -u}(r))\,\mathcal{N}(dr\, du)\\
& = \tfrac{4}{3}\pi\rho \int_{-\infty}^t\int_0^\infty r^3_{t-u} 1_{(0, z]}(z_{t-u}(r))\,\theta_s\,\mathcal{N}(dr\, ds) \cong \mathcal{S}_t
\end{align*}
which proves stationarity.

\mpar
Formulas for the mean and covariance are straightforward applications of \textit{Campbell's theorem} \cite{K} which relates expected values of integrals with respect to a point process to integrals with respect to its intensity measure. When applied to the Poisson point process $\mathcal{N},$  Campbell's theorem states the following. Let $f,g\colon\mathcal{X}\to\mathbb{R}_+$ be nonnegative functions and let,
$$
\mathcal{N}(f)=\int_{-\infty}^\infty\int_0^\infty f(r,t)\mathcal{N}(dr\,dt)\quad\text{and}\quad \Lambda(f)=\int_{-\infty}^\infty\int_0^\infty f(r,t)\Lambda(dr\,dt).
$$

Then,
$$
E\left[\mathcal{N}(f)\right] = \Lambda(f)\quad\text{and}\quad E\left[\mathcal{N}(f)\mathcal{N}(g)\right] =\Lambda(f)\Lambda(g) +\Lambda(fg).
$$

\mpar
In this notation, $\mathcal{S}_t(z)=\mathcal{N}(f_t)$ where,
$$
f_t(r,u) = \tfrac{4}{3}\pi r^3_{t-u}\rho\, 1_{(0, z]}(z_{t-u}(r))\,1_{(-\infty, t]}(u).
$$
By stationarity, the mean function $m(z)$ does not depend on $t.$ Thus $m(z) = E\,\left[\mathcal{S}_0\right]$ so this is just Campbell's theorem applied to,
$$
f_0(r,u)  =  \tfrac{4}{3}\pi r_{-u}^3\rho\, 1_{(0,z]}(z_{-u}(r))\,1_{(-\infty,0]}(u).
$$
To see the details note that,
\begin{align*}
E\,\left[\mathcal{S}_t\right] & =  E\,\left[\mathcal{N}(f_t)\right]\\
& = \tfrac{4}{3}\pi\rho E\, [\int_{-\infty}^t\int_0^\infty r^3_{t-u} 1_{(0, z]}(z_{t-u}(r))\,\mathcal{N}(dr\, du)]\\
& = \tfrac{4}{3}\pi\rho E\,[\int_{-\infty}^0\int_0^\infty r^3_{-u} 1_{(0, z]}(z_{-u}(r))\,\theta_t\mathcal{N}(dr\, du)]\\
& =E[\theta_t\mathcal{N}(f_0)] = E[\mathcal{N}(f_0)]=\Lambda(f_0)\\
& = \tfrac{4}{3}\pi\rho   \int_{-\infty}^0  \int_0^\infty r^3\, 1_{(0,z]}(z_{-u}(r))\mathcal{F}(dr) ds
\end{align*}

\mpar
Turning to the covariance function $\sigma^2(t, z; s, w)$ let,
$$
f_t(r,u)=\tfrac{4}{3}\pi\rho\, r^3_{t-u}\, 1_{(0,z]}(z_{t-u}(r))\,1_{(-\infty, t]}(u)
$$
and
$$
g_s(r,u)=\tfrac{4}{3}\pi\rho\, r^3_{s-u}\, 1_{(0,w]}(z_{s-u}(r))\,1_{(-\infty, s])}(u).
$$

\mpar
Then $\mathcal{S}_t(z)=\mathcal{N}(f_t)$ and $\mathcal{S}_s(w) =\mathcal{N}(g_s).$ In this notation we have,

\begin{align*}
Cov\left(\mathcal{S}_t(z), \mathcal{S}_s(w)\right) & = E\left[(\mathcal{N}(f_t)-m(z))\cdot(\mathcal{N}(g_s)-m(w))\right]\\
& \negthickspace\negthickspace\negthickspace\negthickspace\negthickspace\negthickspace\negthickspace\negthickspace\negthickspace  = E\left[\mathcal{N}(f_t)\mathcal{N}(g_s)\right] -m(z)E\left[\mathcal{N}(g_s)\right] - m(w) E\left[\mathcal{N}(f_t)\right] +m(z)m(w).
\end{align*}

\mpar
Note that, $E\left[\mathcal{N}(f_t)\right] =m(z)=\Lambda(f_t)$ and $E\left[\mathcal{N}(g_s)\right] =m(w) =\Lambda(g_s)$, by stationarity. Thus, by Campbell's theorem,
\begin{align*}
Cov\left(\mathcal{S}_t(z), \mathcal{S}_s(w)\right) & =  E\left[\mathcal{N}(f_t)\mathcal{N}(g_s)\right] -m(z) m(w)\\
& = \Lambda(f_t)\Lambda (g_s) +\Lambda(f_tg_s) - m(z)m(w)\\
& = \Lambda(f_tg_s).
\end{align*}
But,
$$
f_t(r,u)g_s(r,u)=\tfrac{16}{9}\pi^2 \rho^2r^3_{t-u}r^3_{s-u}\, 1_{(0, z]}(z_{t -u}(r))1_{(0, w]}(z_{s-u}(r)) 1_{(-\infty, s\land t]}(u)
$$

\mpar
and the change of variables $v=s\land t-u$ in the time integral finishes the proof.
\end{proof}

\mpar
In a reasonable theory, the mean function $m(z)$ is finite for all values of $z$ and bounded, meaning there exists a height $z_{100}$ such that $m(z)=m(z_{100})$ for all $z\geq z_{100}$ and $m(z)< m(z_{100})$ for some $z<z_{100}.$ Assuming $m(z)$ is continuous and strictly increasing on the interval $[0, z_{100}],$ we can define quantiles $z_X$ as the unique heights such that $X\%$ of the average total mass of sea spray lies below $z_X$, that is,
$$
\frac{m(z_X)}{m(z_{100})}=\frac{X}{100},\quad 0\leq X\leq 100.
$$ 

Thus, quantiles show precisely how sea spray is distributed in the atmosphere. In particular the top quantile, $z_{100},$ quantifies the height of the marine boundary layer in that there is no sea spray mass above this height. 

\mpar
One could argue that a different quantile - or perhaps a completely unrelated measure of atmospheric conditions - represents the height of the marine boundary layer more appropriately than $z_{100};$ for example  $z_{50},$ where half the mass lies below and half lies above, or $z_{90}$ or $z_{95},$ where 10\% or 5\% of the mass lies above. The choice of $z_{100}$ is the most stringent choice and it is significantly easier to calculate that the other quantiles.

\mpar
In general, quantiles are a joint  function of the triple $(\mathcal{F}, z, r).$ But in typical cases the height of the top quantile is determined by $z$ and $r$ alone, as the following result shows.

\begin{corollary} Suppose, $\mathcal{F}(dr)=\mathcal{F}(r) dr$ and $ \mathcal{F}(r)$ is strictly positive on a finite interval $[r_{min},r_{max}]$ and equal to zero otherwise. Suppose $r\to\zeta(r)$ is continuous on $[r_{min},r_{max}]$ and $(t,r)\to (r_t,z_t(r))$ is continuous on,  

$$
A=\{\,(t,r)\mid0\leq t\leq \zeta(r),\,\,  r_{min}\leq r\leq r_{max},\,\}.
$$

\mpar
Finally, suppose $m$ is a continuous function of $z.$ Then,
$$
z_{100}=\max_{(r,t)\in A} z_t(r).
$$
\begin{proof}
Let $z^{**}=\max_{(t,r)\in A} z_t(r).$ Note that  $z_{100}\leq z^{**}$ since no droplet ever exceeds this height. Suppose, on the other hand, that $w< z^{**}.$ Since $z$ is a continuous function and $A$ is a closed set, $z$ attains its maximum value, hence  $z_t(r)>w$  for all $(t,r)$ in an open subset of $A.$ In particular, there is an interval $[a,b]\subset [r_{min}, r_{max}]$ such that,
$$
\int_0^\infty r_t^31_{(0,w]}(z_t(r))dt  < \int_0^\infty r^3_t1_{(0,z_{100}]}(z_t(r))dt
$$
for all $r\in [a,b]$. Thus,
\begin{align*}
m(w) & = \tfrac{4}{3}\pi\rho   \int_0^\infty \negthickspace \int_0^\infty r^3_s\, 1_{(0,w]}(z_s(r))\mathcal{F}(dr) ds\\
& <\tfrac{4}{3}\pi\rho   \int_0^\infty \negthickspace \int_0^\infty r^3_s\, 1_{(0,z_{100}]}(z_s(r))\mathcal{F}(dr) ds\\
& = m(z_{100}),
\end{align*}
since $\mathcal{F}(r)$ is strictly positive on $[a,b].$ By continuity this implies,
$$
m(z^{**})=\lim_{z\to z^{**}}m(z)\leq m(z_{100}),
$$
and because $m$ is non-decreasing, we have $z^{**}\leq z_{100}$ which proves the result.
\end{proof}
\end{corollary}
\mpar
\section{Examples}
It's helpful to consider examples of increasing physical accuracy. This allows us to see the mathematical consequences of various physical assumptions and, conversely, to see the need for a combination of physical assumptions in order to achieve a satisfactory model.

\mpar
We'll see that in each example the droplet's trajectory can be calculated explicitly as a function of physical parameters, which is part of the model's appeal. The calculations are elementary but lengthy so we refer them to appendices so as not to disrupt the narrative.

\begin{example} (Gravity)
The purpose of this example is to show that the mean, covariance, and quantiles can be computed exactly in the simplest case. We'll see that even in this case the formulas can be quite involved which indicates the need for a numerical approach in more realistic situations. See Appendix A for the calculations that support the assertions below.

\mpar
Let the sea spray generation function  be the indicator function of the interval $[r_{min}, r_{max}],$
$$
\mathcal{F}(r) =
\begin{cases}
1, &\text{$r_{min}\leq r\leq r_{max},$}\\
0, &\text{otherwise.}
\end{cases}
$$ 
We assume the only force acting on the droplet is the force of gravity. Then the droplet's radius is constant and the trajectory function corresponds to a freely falling particle with initial position $z=0$ and initial velocity $v>0.$ That is, 
$$
z_t^{\prime\prime} = -g, \quad z_0=0,\quad z_0^\prime = v>0,
$$
hence, 
$$
z_t(r)=vt-\tfrac{g}{2}t^2.
$$

\mpar
Recall that in the absence of external forces other than gravity the mass of a droplet does not affect its motion, so $z_t$ doesn't depend on $r,$ and $r$ doesn't depend on $t$ which greatly simplifies the analysis. In particular, the double integrals factor into products of two integrals.

\mpar
It's easy to see that the maximum height of $z_t$ is $z^*= v^2/2g$ and the duration of flight, namely the positive solution of the equation $z_t=0$, is $t^*= 2v/g.$ 

\mpar
We're interested in the set of times such that  $z_t\leq z$ or, equivalently, the roots of the equation $z_t=z.$ For every $z< z^*$ there are two roots,
$$
t^\pm_z=\frac{1}{g}\left[ v \pm\sqrt{v^2-2gz}\,\right].
$$
and for $z> z^*$ there are none. 

\mpar
To state the formulas succinctly we require additional notation. Let $a\land b$ denote the smaller of $a$ and $b,$ let $a\lor b$ denote the larger of $a$ and $b,$ and let $a_+$ denote the positive part of $a,$
$$
a_+ = \begin{cases}
a, &\text{if $a> 0,$}\\
0, &\text{if $a\leq0$}
\end{cases}
$$

\mpar
The mean and equal time covariance functions are,

\begin{align*}
m(z) & = \tfrac{2\pi\rho}{3g}\,(r_{max}^4-r_{min}^4)\left[v-1_{(0,\frac{v^2}{2g}]}(z)\sqrt{v^2-2gz}\,\right]\\
\sigma^2(t,z; t,w)  &=\tfrac{32\,\pi^2\rho^2}{63\, g}\left(r_{max}^7-r_{min}^7\right)\left[v-1_{(0,\frac{v^2}{2g}]}(z\land w) \sqrt{v^2-2g (z\land w)}\,\right].
\end{align*}

\mpar\mpar
The formula for the general covariance function $\sigma^2(z,t;w,s)$ is stated in cases accordingly as both $z$ and $w$ are greater than $z^*$, they  straddle $z^*,$ or both are less than $z^*,$ that is,

\begin{align*}
& z\land w\geq z^* && \text{(Case I)}\\
& z\land w \leq z^*\leq z\lor w && \text{(Case II)}\\
& z\lor w \leq z^* && \text{(Case III).} 
\end{align*}

\mpar
We have,
$$
\sigma^2(t,z; s,w)  = \tfrac{16}{63}\,\pi^2\rho^2\,(r_{max}^7-r_{min}^7)\Phi(|t-s|, z, w),
$$

\mpar
where in Case I,
$$
\Phi(\lambda, z,w) = (t^*-\lambda)_+,
$$

in Case II,
$$
\Phi(\lambda, z,w)  = (t^*-\lambda)_+ \land t^-_{z\land w} +\left[ (t^*-\lambda)_+-t^+_{z\land w}\right]_+,
$$
and in Case III,

\begin{align*}
\Phi(\lambda, z,w) =t^-_{z\land w}\,\land \,\left(t^-_{z\lor w}-\lambda\right)_+&+\left[ t^-_{z\land w}\,\land\, \left(t^*-\lambda\right)_+ - \left(t^+_{z\lor w}-\lambda\right)_+ \right]_+\\
&+\left[ \left(t^*-\lambda\right)_+ -t^+_{z\land w} \lor \left(t^-_{z\lor w}-\lambda\right)_+ \right]_+.
\end{align*}

\mpar
Observe that these formulas can be written out explicitly in terms of physical constants, if one wishes. For example,
$$
t^+_{z\land w}=\tfrac{1}{g}\left(v+\sqrt{v^2-2g(z\land w)}\right)
$$
and
$$
 (t^*-\lambda)_+=\begin{cases}
\tfrac{v}{2g}-\lambda, &\text{if $\lambda < \tfrac{v}{2g}$}\\
0, &\text{if $\lambda\geq\tfrac{v}{2g}$}.
 \end{cases}
$$

\mpar
Observe also that the height of the marine boundary layer can be calculated exactly,
$$
z_{MBL}=z_{100}=v^2/2g.
$$
The formula for $m(z)$ is simple enough that the equation,
$$
\frac{m(z_X)}{m(z_{100})}=\frac{X}{100}
$$
can be solved explicitly, yielding an exact formula for the quantiles, 
$$
z_X=\tfrac{v^2}{2g}\left(1-(1-\tfrac{X}{100})^2\right).
$$

We emphasize that  the point of this example is not the exact formulas for the mean, covariance, and quantiles, as interesting as they are, but rather the fact that exact formulas can be derived in the simplest case.  
\end{example}

\mpar
\begin{example} (Gravity and Stokes drag forces)
Let the sea spray generation function $\mathcal{F}(r)$ be any positive, continuous function on the interval $[r_{min},r_{max}].$ Let the trajectory function correspond to the solution of the initial value problem,

$$
\tfrac{4}{3}\pi r^3 \rho\, z_t^{\prime\prime}(r) = -\tfrac{4}{3}\pi r^3 \rho\, g +6\pi\mu r (w-z_t^\prime(r)),\quad z_0(r)=0,\quad z_0^\prime(r,w) =v> 0,
$$

\mpar
where $\rho$ is the mass density of sea water, $g$ is the force of gravity, $\mu$ is the dynamical viscosity of the atmosphere, $w$ is the vertical wind speed, and $v$ is the initial velocity. 

\mpar
The purpose of this example is to demonstrate certain unphysical effects whose presence suggests important physical processes have not been taken into account.  While this has the feel of ``proving a negative'' we think the example provides insight and the associated calculations are informative - see Appendix B.

\mpar
We show there are \textit{subcritical, critical} and \textit{supercritical} regimes depending on the relationship of a wind-speed-dependent critical radius $r_{cr}(w)$ to $[r_{min}, r_{max}]$. We show that for fixed $w$ the function,
$$
r\to\int_0^\infty 1_{(0,z]}(z_t(r))dt
$$ 
has a singularity at $r_{cr}(w)$ for sufficiently large $z.$ The singularity is not integrable with respect to $\mathcal{F}(r) dr$ which, roughly speaking, determines the aggregate behavior of the mean $m(z).$ We show,

\mpar
\begin{itemize}
\item{In the subcritical regime, every droplet returns to the ocean surface in finite time and the maximum droplet height is a bounded function of $r.$ This shows $m(z)$ is a bounded function of $z,$ which implies a finite marine boundary layer}\\
\item{In the critical regime, $m(z)$ is infinite for all sufficiently large values of $z$ due to the presence of a layer of permanently suspended droplets}\\
\item{In the supercritical regime, $m(z)$ is finite for all values of $z,$ but is an unbounded function of $z,$ which implies an infinite marine boundary layer}
\end{itemize}

\mpar
It's helpful to rewrite the equation as,
$$
z_t^{\prime\prime}=-g +\tfrac{1}{\tau}(w-z_t^\prime),
$$
where,
$$
\tau=\tau(r) =\frac{2\rho r^2}{9\mu}. 
$$

The trajectory $z_t$ has a closed form solution given by the method of integrating factors, namely,
$$
z_t =(w-g \tau)t-\tau (w-g\tau -v)\left[1-e^{-t/\tau}\right].
$$

Observe that,
\begin{align*}
z_t^\prime & = (w-g \tau)-(w-g\tau -v)e^{-t/\tau}\\
z_t^{\prime\prime} & =\tfrac{1}{\tau} (w-g\tau -v)e^{-t/\tau}
\end{align*}

so it's easy to check that $z_t$ solves the initial value problem.
\mpar
From now on we'll think of radius $r$ and wind speed $w$ as free variables and consider all other parameters as fixed. As a reminder, we write $z_t=z_t(r,w)$ for the trajectory and think of $r$ and $w$ as jointly varying.

\mpar
The first step is to observe that the solution $z_t(r,w)$ changes behavior as the expression $w-g \tau$ changes sign. Note that $w= g\tau$ if and only if,
$$
\frac{w}{r^2}=\frac{2\rho g}{9\mu}.
$$
so this condition depends on the ratio of  wind speed to the squared radius of the droplet or, equivalently, the ratio of wind speed to surface area of the droplet. Thus the equation $w-g\tau=0$ defines a critical radius as a function of wind speed, namely,
$$
r_{cr}(w)=\sqrt{\frac{9\mu w}{2\rho g}}
$$

\mpar
Evidently, if $r< r_{cr}(w)$ then $w-g\tau >0,$  which corresponds to the supercritical regime and if $r>r_{cr}(w)$ then $w-g\tau <0,$ which corresponds to the subcritical regime. Thus, there are three cases of interest: 

\begin{align*}
& \quad r_{cr}(w)< r_{min}\quad && \text{(Subcritical)}\\
& \quad r_{min}\leq r_{cr}(w)\leq r_{max} \quad &&\text{(Critical)} \\
& \quad r_{max}<  r_{cr}(w)\quad  &&\text{(Supercritical)}
\end{align*}

\mpar
and their order in terms of increasing wind speed is first subcritical then critical then supercritical. 

\mpar
\textbf{Supercritical case} In the supercritical case $w-g\tau >0$ for all $r_{min}\leq r\leq r_{max}$ and we'll see that droplets behave ballistically: they are advected by the wind and their height increases linearly without bound. Our goal is the show that $m(z)$ is finite for all $z$ but unbounded, meaning $\lim_{z\to\infty}m(z)=\infty.$ 

\mpar
In Appendix B we show that the trajectory is strictly increasing and bounded above and below by linear functions,
$$
A t \leq z_t(r,w) \leq Bt,
$$
where the constants $A$ and $B$ do not depend on $w$ or $r$. Since the upper function reaches a given height $z$ before the trajectory does, and the lower function reaches that level after the trajectory does, if follows that,
$$
\frac{z}{B}\leq\int_0^\infty 1_{(0,z]}(z_t(r,w)) dt\leq\frac{z}{A}.
$$
Substituting these inequalities into the formula for the mean we find,
$$
 \frac{z}{B}\int_{r_{min}}^{r_{max}}\mathcal{F}(r) dr\leq m(z)\leq \frac{z}{A}\int_{r_{min}}^{r_{max}}\mathcal{F}(r) dr.
$$
Observe the upper bound is finite for all $z$ and the lower bound increases to infinity as $z\to\infty,$ which gives the desired conclusion.

\mpar
\textbf{Subcritical case} In the subcritical case, $w-g\tau<0$ for all $r_{min}\leq r\leq r_{max}$ and we'll see that droplets behave similarly to Example 4.1 in that they reach a maximum height and return to the ocean surface in finite time. 

\mpar
Our goal is to prove that $m(z)$ is a bounded function of $z.$ In the appendix we show that  the duration of the droplet's flight, $t^*(r,w),$ and it's maximum height, $z^*(r,w),$ are bounded functions of $r.$ To see this is sufficient, suppose that, 
$$
t^*(r,w) \leq A\quad\text{and}\quad z^*(r,w) \leq B
$$
for all $r\in [r_{min}, r_{max}].$ Then,
$$
m(z)=\int_0^\infty\int_0^\infty 1_{(0,z]}(z_t(r,w))\mathcal{F}(r) dr\, dt\leq A\int_{r_{min}}^{r_{max}}\mathcal{F}(r) dr <\infty.
$$

Since no droplet achieves a height greater than $B$ it follows that $m(z)=m(B)$ for all $z\geq B,$ hence $m$ is bounded.

\mpar
\textbf{Critical case} In the critical case, $w-g\tau$ changes sign at the critical radius $r_{mi n}\leq r_{cr(w)}\leq r_{max}.$ The first step is to look at the trajectory $z_t(r,w)$ when $w-g\tau=0,$ that is when $r=r_{cr}(w).$ In this case the trajectory simplifies to,
$$
z_t(r, w)=v\tau\left[1-e^{- t/\tau}\right],
$$
hence $\lim_{t\to\infty}z_t(r,w)=v \tau.$ Thus, droplets of radius $r_{cr}(w)$ remain suspended in the atmosphere for all time,  reaching an asymptotic height $v\tau$. 

\mpar
Observe that the equation $z_t(r,w)=v\tau\left[1-e^{-t/\tau}\right] = z$ has a solution if and only if $z<v\tau.$ By a simple calculation, the time $t^-_z(r,w)$ at which the trajectory reaches height $z$ is,
$$
t^-_z(r,w)=\tau\log\left(\frac{1}{1-\frac{z}{v\tau}}\right).
$$

Since $z_t$ is increasing, the total amount of time the droplet spends below height $z$ is,
$$
\int_0^\infty 1_{(0,z]}(z_t(r,w))dt = \begin{cases}
t^-_z(r,w), & \text{if $z< v\tau,$}\\
\infty, & \text{if $z\geq v\tau.$}
\end{cases}
$$
Thus the time integral has a singularity at the critical radius $r_{cr}(w)$ for sufficiently large $z.$ 

\mpar
However even if $r_{min}\leq r_{cr}(w) \leq r_{max},$ the existence of the singularity by itself is not enough to conclude the mean function is infinite, as that depends on whether the singularity is integrable with respect to $\mathcal{F}(r) dr.$ Integrability, in turn, depends on the rate at which the time integral diverges as $r$ varies in the neighborhood of $r_{cr}(w).$ 

\mpar
In the appendix we show that there is a constant $B=B(v,w)$ such that for $z>B$ there exists $\epsilon >0$ and $C(z,w)>0$ such that for all $|r-r_{cr}(w)|<\epsilon,$
$$
\int_0^\infty 1_{(0,z]}(z_t(r,w))dt \geq\frac{C(z,w)}{|r-r_{cr}(w)|}.
$$
Since the function $|x|^{-1}$ is not integrable near zero, this estimate shows that for sufficiently large $z,$
$$
m(z)=\int_0^\infty\int_0^\infty 1_{(0,z]}(z_t(r,w))\mathcal{F}(r) drdt =\infty
$$
in the critical case where $r_{min}\leq r_{cr}(w)\leq r_{max}.$
\end{example}

\mpar\mpar
\begin{example} (Gravity, Stokes drag forces,  and Langmuir's $D$-squared law of evaporation) 
In the previous example, the nonintegrable singularity of the time integral at the critical radius was due to a layer of permanently suspended droplets leading, in the critical regime, to an to an infinite mean function for sufficiently large $z$. Also, in the supercritical regime, droplets behave ballistically and ascend indefinitely at a constant rate leading to an infinite marine boundary layer. These mathematical artifacts indicate the absence of crucial physical effects that must be taken into account in a satisfactory model of sea spray.

\mpar
In typical conditions, heat and mass transfer effects change the radius of droplets as they evaporate. The simplest way to take evaporation into account is through the classic $D$-squared law which states the squared diameter of the droplet, or equivalently, the square of its radius, decreases linearly in time. Specifically, for a droplet with initial radius $r,$ its time-dependent radius satisfies the equation,
$$
r^2_t =r^2-Kt, \,\, 0\leq t\leq r^2/K,
$$
where $K$ is a free parameter called the evaporation constant. Note that each droplet has a finite lifetime $r^2/K$ which effectively eliminates the mathematical artifacts mentioned above.

\mpar
Langmuir's derivation of the $D$-squared  law \cite{L} depends on a number of assumptions including that the droplet is surrounded by a quiescent medium. That is clearly not the case in our model, at least while Stokes drag forces are in effect. Nevertheless we adopt this law for two reasons. First, it allows us to calculate the physical trajectory explicitly. Second, droplets in the ballistic regime - as are all droplets of sufficiently small radius - have negligible net motion relative to the atmosphere and so are effectively in a quiesent medium. 

\mpar
Formally speaking, the dynamical trajectory of a droplet is now a system of differential equations,
\begin{align*}
z^{\prime\prime}_t & =-g +\tfrac{1}{\tau}(w-z^\prime), \quad z_0=0,\,\, z^\prime=v\\
2\,r_t\, r_t^\prime & = -K, \quad r_0=r,\,\, 0\leq t\leq r^2/K.\\
\end{align*}
In a more physically accurate model, the equation for the radius would depend on $z$ and its derivatives, which would then couple the equations together. But the equation for the $D$-squared law doesn't depend on $z$ and can be integrated directly, to the effect that $\tau$ becomes time-dependent,

$$
\tau=\tau_t = \frac{2\rho (r^2-Kt)}{9\mu},\quad 0\leq t\leq r^2/K
$$
\mpar
which implies $z_t$ is the solution of an equation with time dependent coefficients,
$$
z_t^{\prime\prime} = -g +\tfrac{1}{\tau_t}(w-z_t^\prime).
$$
In Appendix C we derive explicit formulas for the solution and it's derivatives on the time interval $[0,r^2/K]$ by the method of integrating factors and find that, 
\begin{align*}
z_t & = w \int_0^t(1-e^{-p_s}) ds + v\int_0^t e^{-p_s} ds -g\int_0^t e^{-p_s}\int_0^s e^{p_u}du\, ds
\end{align*}
where,
$$
p_t=\int_0^t\frac{ds}{\tau_s}, \quad 0\leq t < r^2/K.
$$
Note that the coefficients of $w, v$ and $g$ in the formulas for $z$ and $z^\prime$ are manifestly nonnegative.
\mpar
Calculating the associated integrals is elementary but the precise dependence of the integrals on physical parameters is sufficiently important that we provide all the details in the appendix. We find that, 
$$
z_t=w(t-\phi_t)+v\phi_t - g\psi_t,
$$
where,
\begin{align*}
\phi_t & = \tfrac{r^2}{(K+\tfrac{9\mu}{2\rho})}\left[1-\left(1-\tfrac{Kt}{r^2}\right)^{(K+\tfrac{9\mu}{2\rho})/K}\right]\\\\
\psi_t & = \begin{cases}
r^4\left[\tfrac{2\rho K-9\mu}{2\rho K +9\mu}\left(1-\left(1-\tfrac{Kt}{r^2}\right)^{1+\tfrac{9\mu}{2\rho K}}\right) -\left(\tfrac{1}{2}-\tfrac{9\mu}{4\rho K}\right)\left(1-\left(1-\tfrac{Kt}{r^2}\right)^2\right)\right], &\text{$K\neq\tfrac{9\mu}{2\rho}$}\\\\
\tfrac{r^4}{4K^2}\left[1-\left(1-\tfrac{Kt}{r^2}\right)^2 +2\left(1-\tfrac{Kt}{r^2}\right)^2\log\left(1-\tfrac{Kt}{r^2}\right)\right], & \text{$K=\tfrac{9\mu}{2\rho}$}.
\end{cases}
\end{align*}

\mpar\mpar
Recall that the function $z_t$ solves the equation on the whole interval $[0,r^2/K]$ but it represents the physical trajectory of a droplet only if both $r_t>0$ and $z_t>0.$ Thus the duration $\zeta$ of the droplet's flight is either the evaporation time $r^2/K$ or the smallest positive root of the equation $z_t=0$ on the interval $(0, r^2/K).$ If there is such a root then the droplet returns to the ocean's surface before evaporating and if not then the droplet evaporates in flight.  

\mpar
While expressions in the formula for $z_t$ are explicit, their parameter dependence is too involved to compute the time integrals,
$$
(r,v,w,K)\to\int_0^\infty 1_{(0,z]}(z_t)dt
$$
by hand, as functions of the parameters, let alone the subsequent integrals involving a physically relevant sea spray generation function.  However, in principle one can solve the inequalities $z_t\leq z$ numerically, which means we can also calculate the time integrals and hence the mean and covariance functions numerically.

\mpar
Rather than pursue that line of work here, we content ourselves with a few observations. Let $\mathcal{F}(r)$ be a positive, continuous function of a finite interval $[r_{min}, r_{max}].$

\begin{remark}
\textit{(Finite marine boundary layer) Observe that,
$$
\int_0^\infty 1_{(0,z]}(z_t) dt\leq \zeta\leq r^2/K,
$$
hence,
$$
m(z)\leq \frac{1}{K}\int_{r_{min}}^{r_{max}} r^2\,\mathcal{F}(r) dr,
$$
which shows $m(z)$ is bounded.}
\end{remark}

\begin{remark}
\textit{(Qualitative behavior of the droplet) Let $\mathbb{R}_+=(0,\infty)$ be the positive half line. As a function of its parameters, the duration $\zeta$ defines a $4D$ phase diagram,
$$
\mathbb{R}_+^4=\mathcal{P}_-\cup\,\mathcal{P}_+
$$
where,}
\begin{align*}
\mathcal{P}_- & =\{\, (r,v,w,K)\in\mathbb{R}_+^4\mid \zeta(r,v,w,K)<r^2/K\,\} \\
\mathcal{P}_+ & = \{\, (r,v,w,K)\in\mathbb{R}_+^4\mid\zeta(r,v,w,K)=r^2/K\,\}
\end{align*} 

\textit{Evidently, $\mathcal{P}_-$ represents the physical regime in which the droplet returns to the ocean before evaporating and $\mathcal{P}_+$ represents the physical regime in which the droplet evaporates in flight. A point $(r,v,w,K)\in\mathcal{P}_+$ that is not an interior point of $\mathcal{P}_+$ has the property that there are points $(p_n, v_n, w_n,  K_n)\in\mathcal{P}_-$ such that $\lim_{n\to\infty}p_n=p.$ A point of $\mathcal{P}_-$ that is not an interior point of $\mathcal{P}_-$ has a similar property in that it can be approached arbitrarily closely by points of $\mathcal{P}_+.$ }

\mpar
\textit{This suggests a modified phase diagram as follows. Let $\mathcal{P}^\circ_\pm$ denote the interior of the two phases and let,
$$
\mathcal{P}_0=\left(\mathcal{P}_-\setminus\mathcal{P}_-^\circ\right)\,\cup\,\left(\mathcal{P}_+\setminus\mathcal{P}_+^\circ\right)
$$
be the union of their frontiers. Then,
$$
\mathbb{R}^4_+=\mathcal{P}_-^\circ\cup\mathcal{P}_0\cup\mathcal{P}_+^\circ.
$$
Since $\mathcal{P}_\pm^\circ$ are disjoint, open sets by definition, it follows that $\mathcal{P}_0$ is a nonempty closed set.  The critical phase $\mathcal{P}_0$ marks the transition between two physical regimes.}

\mpar
In the appendix we show $\mathcal{P}_-^\circ$ and $\mathcal{P}_+^\circ$  are nonempty and $\mathcal{P}_0$ is their topological boundary. It follows that $\mathcal{P}_0$  is nonempty since $\mathbb{R}^4_+$ is connected. Specifically, we show that for every triple $(r,w,K)$ of positive numbers such that $K\neq 9\mu/2\rho,$ there exists $w^*>0$ such that $(r,v,w,K)\in\mathcal{P}_+^\circ$ for all $w>w^*$ and that for every triple of positive numbers $(v,w,K)$ such that $K\neq 9\mu/2\rho,$there exists $r^*>0$ such that $(r,v,w,K)\in\mathcal{P}^\circ_-$ for all $r>r^*.$ 
\end{remark}

\begin{remark}
\textit{(Height of the marine boundary layer in the supercritical regime) Fortunately, the formulas are simple enough that we can estimate the top quantile, $z_{100}=z^{**},$ explicitly in the supercritical regime. }

\mpar
\textit{In the appendix we show that for sufficiently large wind speeds, $z_t$ is strictly increasing so it achieves its maximum height at the evaporation time  $r^2/K$. Assuming even higher wind speeds if necessary, we show that $z_{r^2/K}\leq z_{r_{max}^2/K}$ and it follows that,
$$
z^{**}=z_{r_{max}^2/K} = w\times\left(\frac{r_{max}^2}{K} -\frac{r^2_{max}}{K+\frac{9\mu}{2\rho}}\right) +C
$$
for an explicit constant $C$ depending on $\rho, \mu, g, v$ and $K$ but not on $w.$ At large wind speeds $C$ is negligible compared to the other term so that,
$$
z_{MBL} \approx \frac{9\mu}{9\mu +2\rho K}\times w\times\frac{r_{max}^2}{K} 
$$
for all practical purposes. This approximation says that the height of the marine boundary layer is proportional to the wind speed times the evaporation time of the largest droplet, where the proportionality constant depends on the density of sea water, the dynamic viscosity of the atmosphere and the evaporation constant.}
 \end{remark}
\end{example}

\section{Conclusion and Future Work} The main contribution of this work is a simple model of sea spray that is both physically plausible and mathematically tractable.  By example, we showed that accounting for heat and mass transfer effects will play a crucial role in validating the model. The shot noise framework is flexible and adaptable to more general situations than the one considered here. 

\mpar
It would be valuable to calculate the mean, variance, and quantiles numerically in the case of Example 4.3 when the sea spray generation function has a generally accepted form, such as,
$$
\mathcal{F}(r)=\frac{V_A}{\mathcal{C}_A}\, \mathcal{A}\, r^{-(1+\zeta)}\left[ \Gamma_{inc}\left(m+\zeta, m\,\tfrac{r}{r_{max}}\right) - \Gamma_{inc}\left(m+\zeta, m\,\tfrac{r}{r_{min}}\right)\right].
$$
The problem depends on multiple parameters both in the sea spray generation function $(r_{min}, r_{max}, m, \zeta)$ in the physical trajectory $(v, w, K),$ and in the mean $(z)$, variance $(z,t,w,s),$ and quantiles $(X),$ so substantial parameter dependence will be a challenge.
\mpar
It would also be interesting to characterize the critical phase $\mathcal{P}_0$ in an effective way, although it may be of greater mathematical than physical interest.

\mpar
\section*{acknowledgements}
Our thanks are due to Mark Miller for inspiring this project and for his encouragement and support. He suggested we read the paper of Barr, Chen, and Fairall \cite{BCF} which incorporates sea spray in a fully coupled atmosphere-wave-ocean computational model. Their remarks on the role played by sea spray in tropical cyclone modeling prompted us to create the model presented here.

\mpar

\appendix
\mpar\mpar
\section{Details of Example 4.1}
\mpar
Verifying the formula for the mean $m(z)$ turns on a picture of the graph of $z_t.$ Observe that the duration of the droplet's flight is the positive root of the equation $z_t=vt-\tfrac{g}{2}t^2,$ namely $t^*=\tfrac{2v}{g}.$ The maximum height of the droplet occurs when,
$$
z_t^\prime=v-gt =0,
$$
that is, at time $t^\prime=\tfrac{v}{g},$ and the maximum height is,
$$
z^*= z_{t^\prime} =\frac{v^2}{2g}.
$$
If $z>z*$ then the equation $z_t=z$ has no solutions and if $z< z^*$ it has two solutions, namely the roots of the quadratic equation $z_t=vt-\tfrac{g}{2}t^2=z,$ 
$$
t_z^\pm=\frac{1}{g}\left[\, v\pm\sqrt{v^2-2gz}\,  \right].
$$
A moment's though reveals that the total amount of time the droplet spends below height $z$ can be deduced from Figure 1.
\begin{figure}[h]
\begin{tikzpicture}
\draw[->](-1,0) -- (5,0) node[right] {$t$};
\draw[->](0,-1) -- (0,3.3) node[above] {$z$};
\draw[smooth, samples =100, domain =0:4] plot(\x, {2.8*\x-0.7*\x^2});

\draw[fill=black] (4,0) circle (2pt);
\node at (4,-0.5) {$t^*$};

\draw[fill=black] (0,2.8) circle (2pt);
\node at (-0.5, 2.8) {$z^*$};

\draw[fill=black] (0,2.1) circle (2pt);
\node at (-0.5, 2.1) {$z$};

\draw[fill=black] (1,0) circle (2pt);
\node at (1, -0.5) {$t^-_z$};

\draw[fill=black] (3,0) circle (2pt);
\node at (3, -0.5) {$t^+_z$};
\draw[thin] (1,0) -- (1,2.1);
\draw[thin] (3,0) -- (3,2.1);
\draw[thin] (0,2.1) -- (3,2.1);
\draw[thin] (0,2.8) -- (2,2.8);
\end{tikzpicture}
\caption{If $z\geq z^*$ then the total amount of time $z_t$ spends below height $z$ is $t^*.$ On the other hand, if $z< z^*$ then the total amount of time $z_t(r)$ spends below height $z$ is $t^*-[t^+_z-t^-_z].$}
\end{figure}

\mpar
The double integral defining $m(z)$ factors as a  product of integrals and we do the time integral first. Fixing $z$ we have,
\begin{align*}
\int_0^\infty 1_{(0,z]}(z_t) dt & = 
\begin{cases}
t^*, & \text{if $z \geq z^*$}\\
t^*-[t^+_z-t^-_z], & \text{if $z < z^*$}
\end{cases}\\
& = \begin{cases}
\tfrac{2v}{g},& \text{if $z \geq z^*$}\\
\tfrac{2v}{g} -\tfrac{2}{g}\sqrt{v^2-2gv},& \text{if $z < z^*$}
\end{cases}\\
& = \tfrac{2}{g}\left[v-1_{(0,\frac{v^2}{2g}]}(z) \sqrt{v^2-2gz}\,\right].
\end{align*}
Therefore,
\begin{align*}
m(z) & = \tfrac{4}{3}\pi\rho   \int_0^\infty \negthickspace \int_0^\infty r^3\, 1_{(0,z]}(z_t)\mathcal{F}(dr) dt\\
& = \tfrac{4}{3}\pi \rho \int_{r_{min}}^{r_{max}} r^3dr \times  \tfrac{2}{g}\left[v-1_{(0,\tfrac{v^2}{2g}]}(z) \sqrt{v^2-2gz}\,\right]\\
& = \tfrac{2\pi\rho}{3g}\left(r_{max}^4-r_{min}^4\right)\left[v-1_{(0,\tfrac{v^2}{2g}]}(z) \sqrt{v^2-2gz}\,\right].
\end{align*}

\mpar
Observe the time integral calculation also yields the formula for the equal time covariance,
\begin{align*}
\sigma^2(z,t;w,t) &=\tfrac{16}{9}\pi^2\rho^2\int_0^\infty\int_0^\infty r^6\, 1_{(0,z\land w}(z_u)\mathcal{F}(dr)\, du\\
& =\tfrac{32\,\pi^2\rho^2}{63\, g}\left(r_{max}^7-r_{min}^7\right)\left[v-1_{(0,\tfrac{v^2}{2g}]}(z\land w) \sqrt{v^2-2g (z\land w)}\,\right]
\end{align*}
\mpar
Turning to the general covariance, there are two possibilities  for $\sigma^2(z,t; s,w):$ either the smaller height occurs before the larger height or vice versa. In symbols this reads: either $w\leq z$ and $s\leq t$ or $w\leq z$ and $s\geq t.$ For the sake of argument, let's suppose $w\leq z$ and $s\leq t.$ In this case $s-s\land t + u=u$ and $t-s\land t+u =(t-s) +u$ hence,

\begin{align*}
\sigma^2(z,t; s,w) & =\tfrac{16}{9}\pi^2\rho^2 \int_0^\infty \int_0^\infty r^6\, 1_{(0, z]}(z_{t -s\land t +u})\,1_{(0, w]}(z_{s-s\land t +u})\mathcal{F}(dr) du\\
& =\tfrac{16}{9}\pi^2\rho^2 \int_{r_{min}}^{r_{max}} \int_0^\infty r^6\, 1_{(0, z]}(z_{(t -s) +u})\,1_{(0, w]}(z_{u})du dr\\
& =\tfrac{16}{63}\pi^2\rho^2 (r_{max}^7-r_{min}^7)\times \int_0^\infty 1_{(0, z]}(z_{(t -s) +u})\,1_{(0, w]}(z_{u})du.
\end{align*}

\mpar
In the time integral we think of $\lambda =t-s$ as a parameter. It varies between zero and infinity but remains constant during the integral with respect to $u.$ There are three possibilities of increasing complexity: either both $w$ and $z$ are greater than $z*,$ or they straddle $z^*,$ or both are less than $z^*.$ 

\mpar
In case both heights exceed $z^*$ we find the integrand,
$$
u\to1_{(0, z]}(z_{(t -s) +u})\,1_{(0, w]}(z_{u})
$$
is the indicator function of the set,
$$
\{\,u>0\mid u\leq t^*\,\}\cap \{\,u>0\mid \lambda +u\leq t^*\,\} = \{\,u>0\mid  \lambda +u\leq t^*\,\}
$$
whose length is $(t^*-\lambda)_+.$ Thus,
$$
\int_0^\infty 1_{(0, z]}(z_{(t -s) +u})\,1_{(0, w]}(z_{u})du = (t^*-\lambda)_+.
$$
which yields the first case. The remaining two cases follow the same argument: the time integral equals the length of a set defined by various inequalities. 

\mpar
To simplify notion, we will drop the qualification $u>0$ when specifying sets on the understanding that the qualification is in force. In addition we introduce new notation that allows us to calculate the length of an intersection of two intervals in terms of their endpoints. For example, we write,
$$
(a,b)\cap (c,d) =(a\lor c, \,b\land d),
$$
on the understanding that $(a\lor c, \,b\land d)=\emptyset$ if $a\lor c > b\lor d.$ Then the length of the intersection equals $(b\land d-a\lor c)_+$ in every case. Note that we worked with open intervals in which endpoints are excluded. But it's easy to see that the formula remains true if one or more endpoints are included because including endpoints does not change lengths. Finally, if 
$$
T=(a,b)\cup (c,d)\cup \cdots\cup (e,f)
$$
is a union of disjoint intervls and $|T|$ denotes its length then we write, 
$$
|T|=(b-a)_++(d-c)_++\cdots +(f-e)_+.
$$
\mpar
With this notation in hand, suppose $z$ and $w$ straddle $z^*.$ Then the integrand is the indicator function of the set,

\begin{align*}
\{\, \lambda +u\leq t^*\,\}\cap\{\, z_u\leq w\,\} & = \{\, \lambda +u\leq t^*\,\}\cap\left[\{\, u\leq t^-_w\,\}\cup\{ t^+_w\leq u\leq t^*\,\}\right]\\
& =(0, (t^*-\lambda)_+ \land t^-_w) \cup(t^+_w, (t^*-\lambda)_+).
\end{align*}

Observe that these two intervals are disjoint since $t^-_w< t^+_w$ hence,
\begin{align*}
\int_0^\infty 1_{(0, z]}(z_{(t -s) +u})\,1_{(0, w]}(z_{u})du & = |(0,(t^*-\lambda)_+ \land t^-_w)| + |(t^+_w, (t^*-\lambda)_+)|\\
& = (t^*-\lambda)_+ \land t^-_w +\left[ (t^*-\lambda)_+-t^+_w\right]_+.
\end{align*}

Since we assumed that $w\leq z$ or, equivalently, $w=z\land w,$ we can write the above integral more symmetrically as,
$$
 (t^*-\lambda)_+ \land t^-_{z\land w} +\left[ (t^*-\lambda)_+-t^+_{z\land w}\right]_+,
$$

which yields the second case. 

\mpar
Finally, suppose both $z$ and $w$ are less than $z^*$. This is the most complicated case. Nevertheless, the amount of time $z_u$ spends below height $w$ and simultaneously $z_{\lambda+u}$ spends below height $z$ can be deduced from Figure 2.

\begin{figure}[h]
\begin{tikzpicture}
\draw[->](-1,0) -- (5,0) node[right] {$t$};
\draw[->](0,-1) -- (0,3.3) node[above] {$z$};
\draw[smooth, samples =100, domain =0:4] plot(\x, {2.8*\x-0.7*\x^2});

\draw[fill=black] (4,0) circle (2pt);
\node at (4,-0.5) {$t^*$};

\draw[fill=black] (0,2.8) circle (2pt);
\node at (-0.45, 2.82) {$z^*$};

\draw[fill=black] (0,2.55) circle (2pt);
\node at (-0.5, 2.53) {$z$};

\draw[fill=black] (0,2.1) circle (2pt);
\node at (-0.5, 2.1) {$w$};

\draw[fill=black] (1,0) circle (2pt);
\node at (0.96, -0.5) {$t_w^-$};

\draw[fill=black] (3,0) circle (2pt);
\node at (3.1, -0.5) {$t_w^+$};

\draw[fill=black] (1.4,0) circle (2pt);
\node at (1.5, -0.5) {$t^-_z$};

\draw[fill=black] (0.3,0) circle (2pt);
\node at (0.3, -0.5) {$\lambda$};

\draw[fill=black] (2.6,0) circle (2pt);
\node at (2.55, -0.5) {$t_z^+$};
\draw[thin] (1,0) -- (1,2.1);
\draw[thin] (3,0) -- (3,2.1);
\draw[thin] (0,2.1) -- (3,2.1);
\draw[thin] (1.4,0) -- (1.4,2.55);
\draw[thin] (2.6,0) -- (2.6,2.55);
\draw[thin] (0, 2.8) -- (2, 2.8);
\draw[thin] (0,2.55) -- (2.6,2.55);
\end{tikzpicture}
\caption{In this diagram, $w< z < z^*,\, s<t,$ and $\lambda =t-s$ is small.}  
\end{figure}

Evaluating the time integral just amounts to calculating the length of the set,

$$
T^{\lambda}_{ w, z} = \{\, 0<z_{\lambda+u}\leq z\, \} \cap \{\, 0<z_u\leq w \,\}.
$$
 
 Note that,
$$
\{\,0<z_u\leq w \,\}  = (0, t^-_w] \cup [t^+_w, t^*)
$$
and
\begin{align*}
\{\,0<z_{\lambda+u}\leq z\,\} & = \{\, 0< \lambda+u\leq t^-_z\,\} \cup \{\, t^+_z\leq \lambda +u<t^*\,\}\\
& =\left(0, (t^-_z-\lambda)_+\right]\cup \left[(t^+_z -\lambda)_+, (t^*-\lambda)_+\right).
\end{align*}

Thus we have,
\begin{align*}
T^{\lambda}_{ w, z} & =\Big[\left(0, t^-_w\right] \cup \left[t^+_w, t^*\right)\Big] \cap \Big[\left(0, (t^-_z-\lambda)_+\right] \cup  \left[(t^+_z -\lambda)_+, (t^*-\lambda)_+ \right)\Big]\\
& = I\,\cup\, II\,\cup\, III\,\cup\, IV,
\end{align*}
where,
\begin{align*}
I & = \left(0, t^-_w\right]\cap \left(0, (t^-_z-\lambda)_+\right]\\
II & =\left(0, t^-_w\right]\cap \left[(t^+_z -\lambda)_+, (t^*-\lambda)_+ \right)\\
III & = \left[t^+_w, t^*\right)\cap \left(0, (t^-_z-\lambda)_+\right]\\
IV & =\left[t^+_w, t^*\right)\cap\left[(t^+_z -\lambda)_+, (t^*-\lambda)_+ \right).
\end{align*}

First of all notice that $III$ is empty. This is because the two sets are disjoint since $t^-_z < t^+_z.$ Second, $I, II,$ and $IV$ are pairwise disjoint for similar reasons. For example, $I$ and $II$ are disjoint because $t^-_z<t^+_z$ and the others are disjoint because $t^-_w <t^+_w.$ Therefore the length of $T^\lambda_{z,w}$ equals the sum of the lengths of $I, II$ and $IV.$ Thus $|T^\lambda_{z,w}|=|I|+|II|+|IV|$ where,

\begin{align*}
|I| & =t^-_w\land\left(t^-_z-\lambda\right)_+ \\
|II| & = \Big[ t^-_w\land \left(t^*-\lambda\right)_+ - \left(t^+_z-\lambda\right)_+ \Big]_+\\
|IV| & = \Big[ \left(t^*-\lambda\right)_+ -t^+_w\lor \left(t^-_z-\lambda\right)_+ \Big]_+
\end{align*}
The formula for the general covariance follows by observing $w-z\land w$ and $z=z\lor w.$ 
\mpar
Observe that each droplet attains a maximum height of $z^*=v^2/2g.$ Thus $m(z)=m(z^*)$ for all $z>z^*$ and by inspection $m(z)<m(z^*)$ for all $z< z^*,$ which shows that $z_{100}=z^*.$ Since the formula for $m(z)$ is a product, the factors containing $r$ cancel in the quotient and we have,
$$
\frac{X}{100} = \frac{m(z_X)}{m(z_{100})} = \frac{v-\sqrt{v^2-2gz_X}}{v}
$$
hence,
$$
z_X=\tfrac{v^2}{2g}\left(1-(1-\tfrac{X}{100})^2\right).
$$

\mpar\mpar
\section{Details of Example 4.2}

Recall that,
\begin{align*}
z_t & =(w-g \tau)t-\tau (w-g\tau -v)\left[1-e^{-t/\tau}\right],\\
z_t^\prime & = (w-g \tau)-(w-g\tau -v)e^{-t/\tau}\\
z_t^{\prime\prime} & =\tfrac{1}{\tau} (w-g\tau -v)e^{-t/\tau},
\end{align*}
and the the critical radius,
$$
r_{cr}(w)=\sqrt{\frac{9\mu w}{2\rho g}}
$$
is characterized by the fact that $w-g\tau(r)>0,$ if $r<r_{cr(w)},$ and $w-g\tau(r)<0,$ if $r>r_{cr(w)}.$

\mpar
\textbf{Supercritical case.} If $r_{max}<r_{cr(w)}$ then all droplets with radii in the interval $[r_{min}, r_{max}]$ behave ballistically meaning,
$$
\lim_{t\to\infty}\frac{z_t(r)}{t}=w-g\tau(r) >0,
$$
hence droplet height increases linearly without bound.

\mpar
Estimating the growth of $z_t$ independently of $r$ is based on the fact it lies strictly between the graphs of the linear functions $f(t)=vt$ and $g(t)=(w-g\tau)t.$ Obviously, the linear function with the larger slope lies above while the one with the smaller slope lies below, namely,
$$
v\land (w-g\tau(r)\times t\,\,\leq z_t(r)\leq \,\,v\lor (w-g\tau(r))\times t.
$$

Observe that,
$$
v\lor (w-g\tau(r))\geq v\lor(w-g\tau(r_{min})) = A
$$
and
$$
v\land (w-g\tau(r))\leq v\land(w-g\tau(r_{max})) =B.
$$
It follows that,
$$
At\leq z_t(r,w)\leq Bt
$$ 
for all $r\in [r_{min}, r_{max}].$
\mpar
\textbf{Subcritical case.} If $r_{cr}(w) < r_{min}$ then $w- g\tau(r) <0$ for all radii in the interval $[r_{min}, r_{max}]$ and therefore the droplet behaves similarly to Example 4.1. Our goal is to prove that the duration $t^*$ and the maximum height $z^*$ are bounded independently of $r.$  

\mpar
Evidently, $\lim_{t\to\infty}z_t=-\infty,$ so the smallest root $t=t^*(r,w)$ of the equation,
$$
(w-g\tau)t -\tau(w-g\tau-v)\left[1-e^{-t/\tau}\right] =0
$$

is the duration of the droplet's flight. This happens when,
$$
t^*=\tau(r)\times \left(1+\frac{v}{g\tau(r)-w}\right)\times \left[1- e^{-t^*/\tau(r)}\right],
$$
which is bounded above by,
$$
A=\tau(r_{max})\times \left(1+\frac{v}{g\tau(r_{min})-w}\right).
$$
\mpar
To find the maximum height observe that $z_t^\prime(r,w)=0$ if and only if 
$$
w-g\tau = (w-g\tau -v)e^{-t/\tau}
$$
which occurs at the time,
$$
t^\prime(r,w)=\tau\log\left(1+\frac{v}{g\tau -w}\right).
$$

Since $z_t^{\prime\prime}<0$ for all $t>0$ it follows that the droplet reaches its maximum height $z^*=z^*(r,w)$ at time $t^\prime=t^\prime(r,w)$ where,
\begin{align*}
z^* & = z_{t^\prime}\\
& = (w-g\tau)t^\prime -\tau(w-g\tau-v)\left[1-e^{-t^\prime/\tau}\right]\\
& = -\tau(g\tau-w)\log\left(1+\tfrac{v}{g\tau -w}\right)+\tau(g\tau -w +v)\left[1-\frac{1}{1+\tfrac{v}{g\tau -w}}\right].
\end{align*}

Observe that the first term above is negative hence,
\begin{align*}
z^* &\leq \tau(r)(g\tau(r) -w +v)\left[1-\frac{1}{1+\tfrac{v}{g\tau(r) -w}}\right]\\
&\leq \tau(r)(g\tau(r) -w +v)\\
&\leq \tau(r_{max})(g\tau(r_{max})-w +v) = B,
\end{align*}
so $z^*$ is bounded above by a fixed constant.

\mpar
\textbf{Critical case.} Our goal is to show that for $z>v\tau(r_{cr}(w))$ there exists $\epsilon >0$ and $C(z,w)>0$ such that for all $|r-r_{cr}(w)|<\epsilon,$
$$
\int_0^\infty 1_{(0,z]}(z_t(r,w))dt \geq\frac{C(z,w)}{|r-r_{cr}(w)|}.
$$
We do this by estimating the time integral as $r$ increases to $r_{cr}(w)$ and again as $r$ decreases to $r_{cr},$ as different arguments are required depending on the sign of $w-g\tau(r).$

\mpar
First, suppose $r<r_{cr}$ so that $w-g\tau(r)$ is positive but decreases to zero as $r$ increases to $r_{cr}(w).$ This is the supercritical case. Note that $z^{\prime\prime}_t >0$ for all $t>0,$ if $v<w-g\tau,$ and $z^{\prime\prime}_t < 0$ for all $t>0,$ if $v >w-g\tau.$ In the former case $z_t^\prime$ increases from $v$ to $w-g\tau$ while in the latter case it decreases from $v$ to $w-g\tau.$ In either case, $z^\prime (r,w)$ is positive hence $z_t(r,w)$ is strictly increasing. Thus the equation, $z_t(r)=z$ has a unique solution $t =t^-_z(r,w)$ for every $z>0$ and it follows that,
$$  
\int_0^\infty 1_{(0,z]}(z_t(r,w))dt = t^-_z(r,w).
$$

Recall that,
$$
z_t(r)\leq v\lor (w-g\tau(r))\times t
$$
and therefore,
$$
t^-_z(r,w)\geq\frac{z}{v\lor (w-g\tau(r))}.
$$
Since $w-g\tau(r)$ decreases to zero as $r$ increases to $r_{cr}(w)$ the lower bound becomes $z/(w-g\tau(r))$ for all $r$ sufficiently close to $r_{cr}.$ Observe that,
\begin{align*}
w-g\tau(r) & = w-\tfrac{2\rho g}{9\mu}[r_{cr}^2(w) - (r_{cr}^2(w)-r^2)]\\
& =  w-g\tau(r_{cr}(w)) +\tfrac{2\rho g}{9\mu}(r_{cr}^2(w)-r^2)\\
& = \tfrac{2\rho g}{9\mu}(r_{cr}(w)-r)(r_{cr}(w)+r).
\end{align*}

It follows that,
$$
(r_{cr}(w)-r)\frac{z}{w-g\tau(r)}=\frac{z}{\frac{2\rho g}{9\mu}(r_{cr}(w)+r)}
$$
and therefore,
$$
\lim_{r\nearrow r_{cr}(w)}(r_{cr}(w)-r)t^-_z(r,w)\geq\frac{18\mu z}{4\rho g\, r_{cr}(w)}.
$$
Thus there exist $\epsilon =\epsilon(v)$ and a constant $C=C(z,w)$ such that for all $r_{cr}(w)-r<\epsilon,$
$$
t^-_z(r,w)\geq \frac{C(z,w)}{r_{cr}(w)-r}.
$$

\mpar
Now suppose $r>r_{cr}(w)$ so that $w-g\tau(r)$ is negative but decreases to zero as $r$ decreases to $r_{cr}(w).$ This is the subcritical case. 

\mpar
Recall the maximum height $z^*$ is bounded, 
$$
z^*\leq \tau(r_{max})(g\tau(r_{max})-w+v) =B
$$
hence for all $z >B,$ the total amount of time $z_t$ spends below $z$ is the duration $t^*= t^*(r,w).$ Recall that the the duration satisfies the equation,
$$
t^* = \tau(r)\times \left(1+\frac{v}{g\tau(r)-w}\right)\times \left[1-e^{-t^*/\tau(r)}\right].
$$
To bound this expression from below it suffices to bound $1-e^{-t^*/\tau(r)}$ away from zero. Let us show that there is an $\epsilon=\epsilon(v)$ such that $1-e^{-t^*/\tau(r)}\geq 1/2$ for all $r-r_{cr}(w)<\epsilon.$ Assuming this is the case we have,
$$
t^* \geq \tfrac{1}{2}\, \tau(r)\left(1+\frac{v}{g\tau(r)-w}\right)
$$
which implies by a previous argument that, there exists a possibly smaller $\epsilon=\epsilon(v)$ and a constant  $C=C(z,w)$ such that for all $z>B$ and $r-r_{cr}(w)<\epsilon,$
$$
t^-_z(r,w)=t^*_z(r,w)\geq \frac{C(z,w)}{r_{cr}(w)-r}.
$$

The key observation is that the duration is larger than the time it takes the trajectory to hit its maximum, that is to say,
$$
t^*(r,w)\geq t^\prime_z(r,w) = \tau(r)\log\left(1+\frac{v}{g\tau(r)-w}\right).
$$
Since $g\tau(r)-w$ decreases to zero as $r$ decreases to $r_{cr}(w),$ there is an $\epsilon=\epsilon(v)$ such that $g\tau(r)-w\leq v$ for all $r-r_{cr}(w)<\epsilon.$ In this case, $t^*\geq \tau\log(2)$ and therefore,
$$
1-e^{-t^*/\tau}\geq 1-e^{-\log(2)} = \tfrac{1}{2}.
$$

\mpar
\section{Details of Example 4.3} 
Our first goal is to derive explicit formulas for the solution,
$$
z_t^{\prime\prime} = -g +\tfrac{1}{\tau_t}(w-z_t^\prime)'\quad z_0=0,\, z^\prime_0=v
$$
where,
$$
\tau_t=\frac{9\mu}{2\rho(r^2-Kt)}
$$
by the method of integrating factors. Let,
$$
p_t=\int_0^t\frac{ds}{\tau_s},\,\, 0\leq t<\frac{r^2}{K}.
$$
Then,
\begin{align*}
\left(e^{p_t}z^\prime_t\right)^\prime & = e^{p_t}\left(z^{\prime\prime} +p^\prime_tz^\prime_t\right)\\
& = e^{p(t)}\left(z^{\prime\prime} +\tfrac{1}{\tau_t}z^\prime_t\right)\\
& = e^{p(t)}\left(-g+\tfrac{w}{\tau_t}\right)\\
& = e^{p(t)}\left(-g+w p^\prime_t\right)\\
\end{align*}

Thus,
\begin{align*}
e^{p_t} z^\prime_t -v  & = w\int_0^t p^\prime_s e^{p_s}ds -g\int_0^t e^{p_s}ds\\
& = w [e^{p_t} -1] - g\int_0^t e^{p_s}ds
\end{align*}
and it follows that,
$$
z^\prime_t =w(1-e^{-p_t}) + v\, e^{-p_t} - g\,e^{-p_t}\int_0^t e^{p_s}ds.
$$
Integrating once more we find,
\begin{align*}
z_t &= wt + (v-w)\int_0^t e^{-p_s} ds -g\int_0^t e^{-p_s}\int_0^s e^{p_u}du\, ds\\
& = w(t-\phi_t) + v\phi_t - g\psi_t.
\end{align*}

Calculating the integrals is straightforward. We have,
\begin{align*}
p_t & = \int_0^t\frac{ds}{\tau_s}\\
& =\tfrac{9\mu}{2\rho}\int_0^t \frac{ds}{r^2-Ks}\\
& = \tfrac{9\mu}{2\rho K}\int_{r^2-Kt}^{r^2} \frac{du}{u}\\
& = \tfrac{9\mu}{2\rho K} \log\left(\frac{1}{1-\frac{Kt}{r^2}}\right).
\end{align*}
hence,
$$
e^{p_t} = \left(1-\frac{Kt}{r^2}\right)^{-\frac{9\mu}{2\rho K}} \quad\text{and}\quad e^{-p_t} = \left(1-\frac{Kt}{r^2}\right)^{\frac{9\mu}{2\rho K}}.
$$
With these formulas in hand we find,
\begin{align*}
\phi_t & = \int_0^te^{-p_s} ds\\
& = \int_0^t\left(1-\tfrac{Ks}{r^2}\right)^ads,\quad a=\tfrac{9\mu}{2\rho K},\\
& =\tfrac{r^2}{K}\int_{1-\frac{Kt}{r^2}}^1 u^a du\\
& = \tfrac{r^2}{K(1+a)}\left[1-\left(1-\tfrac{Kt}{r^2}\right)^{1+a}\right]\\
& = \tfrac{r^2}{(K+\tfrac{9\mu}{2\rho})}\left[1-\left(1-\tfrac{Kt}{r^2}\right)^{(K+\tfrac{9\mu}{2\rho})/K}\right],
\end{align*}

Similarly we have,
\begin{align*}
\int_0^t e^{p_s} ds & = \int_0^t\left(1-\tfrac{Ks}{r^2}\right)^{-a} ds,\quad a=\tfrac{9\mu}{2\rho K}\\
& = \tfrac{r^2}{K}\int_{1-\frac{Kt}{r^2}}^1 u^{-a} du\\
& = \begin{cases}
\tfrac{r^2}{(K -\tfrac{9\mu}{2\rho})}\left[1-\left(1-\frac{Kt}{r^2}\right)^{(K-\tfrac{9\mu}{2\rho})/K}\right], &\text{$K\neq \tfrac{9\mu}{2\rho},$}\\\\
\tfrac{r^2}{K}\log(\frac{1}{1-\frac{Kt}{r^2}}), &\text{$K=\tfrac{9\mu}{2\rho}$}
\end{cases}
\end{align*}

\mpar
It follows that,
\begin{align*}
e^{-p_t}\int _0^te^{p_s}ds & = \begin{cases}
\tfrac{r^2}{(K-\tfrac{9\mu}{2\rho})}\left[\left(1-\frac{Kt}{r^2}\right)^{\frac{9\mu}{2\rho K}}-\left(1-\frac{Kt}{r^2}\right)\right], &\text{$K\neq\frac{9\mu}{2\rho}$}\\\\
\tfrac{r^2}{K}\left(1-\frac{Kt}{r^2}\right)\log\left(\frac{1}{1-\frac{Kt}{r^2}}\right), &\text{$K=\frac{9\mu}{2\rho}$}
\end{cases}\\
\end{align*}

\mpar
To exhibit a formula for $\psi_t$ we argue by cases. If $K\neq 9\mu/2\rho$ we have,
\begin{align*}
\psi_t & = \int_0^t e^{-p_s}\int_0^s e^{p_u} du\, ds\\
 & = \tfrac{r^2}{(K-\tfrac{9\mu}{2\rho})}\int_0^t \left[\left(1-\tfrac{Ks}{r^2}\right)^{\frac{9\mu}{2\rho K}}-\left(1-\tfrac{Ks}{r^2}\right)\right] ds\\
& = (K-\tfrac{9\mu}{2\rho}) r^2\left[\tfrac{r^2}{K+\frac{9\mu}{2\rho }}\left(1-\left(1-\tfrac{Kt}{r^2}\right)^{1+\tfrac{9\mu}{2\rho K}}\right) - \tfrac{r^2}{2K}\left(1-\left(1-\tfrac{Kt}{r^2}\right)^2\right)\right]\\
& = r^4\left[\tfrac{2\rho K-9\mu}{2\rho K +9\mu}\left(1-\left(1-\tfrac{Kt}{r^2}\right)^{1+\tfrac{9\mu}{2\rho K}}\right) -\left(\tfrac{1}{2}-\tfrac{9\mu}{4\rho K}\right)\left(1-\left(1-\tfrac{Kt}{r^2}\right)^2\right)\right].
\end{align*}

On the other hand, if $K=9\mu/2\rho$ then,
\begin{align*}
\psi_t & = \int_0^t e^{-p_s}\int_0^s e^{p_u} du\, ds\\
& = \tfrac{r^2}{K}\int_0^t \left(1-\tfrac{Ks}{r^2}\right)\log\left(\tfrac{1}{1-\tfrac{Ks}{r^2}}\right)\\
& = -\tfrac{r^4}{K^2}\int_{1-\tfrac{Kt}{r^2}}^1 u\log(u)du\\
& =\tfrac{r^4}{K^2}\left[\tfrac{1}{4}u^2 -\tfrac{1}{2}u^2\log(u) \right]_{1-\tfrac{Kt}{r^2}}^1\\
& =\tfrac{r^4}{4K^2}\left[1-\left(1-\tfrac{Kt}{r^2}\right)^2 +2\left(1-\tfrac{Kt}{r^2}\right)^2\log\left(1-\tfrac{Kt}{r^2}\right)\right].
\end{align*}

\mpar
With these formulas in hand, it follows that,
$$
z_t^\prime =w(1-\phi^\prime_t)+ v\phi^\prime_t  - g\psi^\prime_t,
$$
where,
\begin{align*}
\phi_t^\prime & =e^{-pt} = \left(1-\frac{Kt}{r^2}\right)^{\frac{9\mu}{2\rho K}}\\\\
\psi_t^\prime & = e^{-p_t}\int_0^t e^{p_s} ds = \begin{cases}
\tfrac{r^2}{(K-\tfrac{9\mu}{2\rho})}\left[\left(1-\frac{Kt}{r^2}\right)^{\frac{9\mu}{2\rho K}}-\left(1-\frac{Kt}{r^2}\right)\right], &\text{$K\neq\frac{9\mu}{2\rho}$}\\\\
\tfrac{r^2}{K}\left(1-\frac{Kt}{r^2}\right)\log\left(\frac{1}{1-\frac{Kt}{r^2}}\right), &\text{$K=\frac{9\mu}{2\rho}$}
\end{cases}
\end{align*}

In addition we have,
$$
z_t = w(t-\phi_t) + v\phi_t - g\psi_t
$$
where,
\begin{align*}
\phi_t & = \tfrac{r^2}{(K+\tfrac{9\mu}{2\rho})}\left[1-\left(1-\tfrac{Kt}{r^2}\right)^{(K+\tfrac{9\mu}{2\rho})/K}\right]\\
\psi_t & = \begin{cases}
r^4\left[\tfrac{2\rho K-9\mu}{2\rho K +9\mu}\left(1-\left(1-\tfrac{Kt}{r^2}\right)^{1+\tfrac{9\mu}{2\rho K}}\right) -\left(\tfrac{1}{2}-\tfrac{9\mu}{4\rho K}\right)\left(1-\left(1-\tfrac{Kt}{r^2}\right)^2\right)\right], &\text{$K\neq\tfrac{9\mu}{2\rho}$}\\\\
\tfrac{r^4}{4K^2}\left[1-\left(1-\tfrac{Kt}{r^2}\right)^2 +2\left(1-\tfrac{Kt}{r^2}\right)^2\log\left(1-\tfrac{Kt}{r^2}\right)\right], & \text{$K=\tfrac{9\mu}{2\rho}$}.
\end{cases}
\end{align*}

\mpar
Our next goal is to show that the two phases $\mathcal{P}_\pm^\circ$ are nonempty. Regarding $\mathcal{P}_-^\circ$ observe that $z_{r^2/K}=0$ if and only if,
$$
w(T-\phi_T)+v\phi_t-g\psi_T =0,\quad T=\frac{r^2}{k}.
$$
Noting that the formulas for $\phi_t$ and $\psi_t$ simplify significantly when $t=T=r^2/K,$ the equation above is equivalent to,
$$
w\left(\frac{r^2}{K}-\frac{r^2}{K+\frac{9\mu}{2\rho}}\right) +v\,\frac{r^2}{K+\frac{9\mu}{2\rho}} = g\, r^4\left[\frac{K-\frac{9\mu}{2\rho}}{K+\frac{9\mu}{2\rho}} -\left(\frac{1}{2}-\frac{9\mu}{4\rho K}\right)\right]
$$
when $K\neq 9\mu/2\rho.$ This equation has the form,
$$
ar^2=br^4
$$
for specific constants $a$ and $b$ or, equivalently, $r^2  = a/b.$ Now,
\begin{align*}
a & =w\left(\frac{1}{K}-\frac{1}{K+\frac{9\mu}{2\rho}}\right) +v\,\frac{1}{K+\frac{9\mu}{2\rho}}\\
& =\frac{w\,\frac{9\mu}{2\rho} + v\,K}{K\left(K+\frac{9\mu}{2\rho}\right)}
\end{align*}
and,
\begin{align*}
b &= g\,\left[\frac{K-\frac{9\mu}{2\rho}}{K+\frac{9\mu}{2\rho}} -\left(\frac{1}{2}-\frac{9\mu}{4\rho K}\right)\right]\\
& = g\left[\frac{K-\frac{9\mu}{2\rho}}{K+\frac{9\mu}{2\rho}}-\frac{K-\frac{9\mu}{2\rho}}{2K}\right]\\
& =g\left(K-\frac{9\mu}{2\rho}\right)\left[\frac{1}{K+\frac{9\mu}{2\rho}}-\frac{1}{2K}\right]\\
& =\frac{g\left(K-\frac{9\mu}{2\rho}\right)^2}{2K\left(K+\frac{9\mu}{2\rho}\right)}.
\end{align*}
Thus,
$$
r^2 =\frac{a}{b}= \frac{2\left(w\frac{9\mu}{2\rho} +vK\right)}{g\left(K-\frac{9\mu}{2\rho}\right)^2}.
$$
Let $r^*= r^*(v,w,K)$ be the square root of the quantity above. These calculations show that if $r> r^*$ then $z_{r^2/K}< 0,$ hence $\zeta(r,v,w,K)< r^2/K$ which implies $p=(r,v,w,K)\in\mathcal{P}_-.$ Since $z_t$ is a continuous function of its parameters, $z_{r^2/K}<0$ all points $q\in\mathcal{P}_-$ sufficiently close to $p$ which shows that $p$ is an interior point of $\mathcal{P}_-.$ Therefore $\mathcal{P}_-^\circ$ is a nonempty open set.

\mpar
Next let's consider $\mathcal{P}_+^\circ.$ Recall that,
$$
z^\prime_t=w(1-\phi_t^\prime) + v\phi_t^\prime -g\psi_t^\prime
$$
where,
\begin{align*}
\phi_t^\prime & =\left(1-\frac{Kt}{r^2}\right)\\
\psi_t^\prime & = \tfrac{r^2}{(K-\tfrac{9\mu}{2\rho})}\left[\left(1-\frac{Kt}{r^2}\right)^{\frac{9\mu}{2\rho K}}-\left(1-\frac{Kt}{r^2}\right)\right], &\text{$K\neq\frac{9\mu}{2\rho}$}
\end{align*}

\mpar
Observe that $z_t^\prime >0$ provided the graph of the function $t\to g\psi_t^\prime$ lies below the line segment $t\to (0,v)\phi_t^\prime + (r^2/K, w)(1-\phi_t^\prime).$ Evidently, parameters $p=(r,v,w,K)$ for which this is true must lie in the phase $\mathcal{P}_+$ because $z_t>0$ for all $0<t\leq r^2/K.$ Since $z_t$ is a continuous function of its parameters, it follows that $z_t$ is strictly positive on $(0,r^2/K]$ for all $q\in\mathcal{P}_+$ sufficiently close to $p,$ which shows that $p\in\mathcal{P}_+^\circ.$

\mpar
To see that this is true for sufficiently large $w,$ note that $\phi_t^\prime$ is \textit{unimodal} - that is, it increases from zero to a maximum height at then decreases back to zero. Thus there is a $t^*$ such that $\phi_t^\prime$ increases from zero on the interval $(0,t^*)$ and decreases to zero on the interval $(t^*,r^2/K).$  Observe that as $w$ increases from zero to infinity, the line segment intersects the graph in two places, then becomes tangent to the graph at a specific value $w^*,$ and is disjoint from the graph of $f$ for all $w>w^*.$ This shows that for fixed $(r,v,K)$ such that $K\neq 9\mu/2\rho$ there is a $w^*$ such that for all $w>w^*,$ $p=(r,v,w,K)\in\mathcal{P}_+^\circ.$  Therefore, $\mathcal{P}_+^\circ$ is a nonempty open set.

\mpar
Our final goal is to evaluate $z^{**}$ in the supercritical regime. We argue in the case $K\neq 9\mu/2\rho$ and observe the remaining case is similar. Note that in the supercritical phase $\mathcal{P}_+,$ the droplet does not return to the ocean surface. Therefore,
\begin{align*}
z^{**} &=\max\{\, z_t(r)\mid 0\leq t\leq r^2/K,\,r_{min}\leq r\leq r_{max}\}\\
&\geq \max\{\, z_{r^2/K}(r)\mid\,r_{min}\leq r\leq r_{max}\}
\end{align*}
since the maximum height of the droplet may occur at the evaporation time. On the other hand, we know that $z_t\leq z_{r^2/K}$ at sufficiently high wind speeds $w>w^*.$ Then we have,
\begin{align*}
z^{**} & = \max\{\, z_{r^2/K}(r)\mid\,r_{min}\leq r\leq r_{max}\}\\
& = \max\{\, ar^2-br^4\mid,r_{min}\leq r\leq r_{max}\}\\
\end{align*}
where,
$$
a=\frac{w\,\frac{9\mu}{2\rho} + v\,K}{K\left(K+\frac{9\mu}{2\rho}\right)}\quad\text{and}\quad b=\frac{g\left(K-\frac{9\mu}{2\rho}\right)^2}{2K\left(K+\frac{9\mu}{2\rho}\right)}.
$$

\mpar
Taking the derivative of $ar^2-br^4$ in $r$ we find this polynomial is increasing if and only if $2ar-4br^3>0$ or, equivalently, if $a>2br^2.$ Since $a$ increases in proportion to $w$ and $b$ does not depend on $w,$ there exists $w^{**}\geq w^*$ such that $a>2br^2$ positive on $[r_{min}, r_{max}].$ This implies $z_{r^2/K}(r)$ is maximized at $r=r_{max}$ hence,
\begin{align*}
z^{**} & =z_T(r_{max}),\quad T=r_{max}^2/K\\
& = w(T-\phi_T) + v\phi_T - g\psi_T\\
& = w\left(\frac{r_{max}^2}{K}-\frac{r_{max}^2}{K+\frac{9\mu}{2\rho}}\right) + \left(v\frac{r_{max}^2}{K+\frac{9\mu}{2\rho}} - \frac{g\left(K-\frac{9\mu}{2\rho}\right)^2}{2K\left(K+\frac{9\mu}{2\rho}\right)}\right)\\
& =\frac{9\mu}{9\mu + 2\rho K}\times w\times\frac{r_{max}^2}{K} +C.
\end{align*}

Note that $C$ depends on $\rho, \mu, g, r_{max}, v,$  and $K$ but not on $w$ so it makes a negligible contribution to $z^{**}$ at sufficiently large wind speeds.
\end{document}